\documentclass[letterpaper]{amsart}
\usepackage{amsmath, amsthm, amsrefs, amssymb, enumitem, mathtools, hyperref, color, comment, pgfplots}
\pgfplotsset{compat=1.18}

\theoremstyle{plain}
\newtheorem{theorem}{Theorem}
\newtheorem{corollary}[theorem]{Corollary}
\newtheorem{lemma}[theorem]{Lemma}
\newtheorem{proposition}[theorem]{Proposition}

\theoremstyle{definition}
\newtheorem{definition}[theorem]{Definition}

\theoremstyle{remark}
\newtheorem{remark}[theorem]{Remark}

\numberwithin{equation}{section}
\numberwithin{theorem}{section}

\DeclareMathOperator{\D}{\mathbb{D}}
\DeclareMathOperator{\C}{\mathbb{C}}

\DeclareMathOperator{\diam}{diam}
\DeclareMathOperator{\dist}{dist}
\newcommand{\N}{\mathbb N}
\newcommand{\R}{\mathbb R}

\newcommand{\br}{\overline}

\begin{document}
\title{Characterization of John domains via weak tangents}

\author{Christina Karafyllia}  
\address{Department of Mathematics, University of Western Macedonia, Kastoria 52100, Greece}
\email{chkarafyllia@uowm.gr}   

\subjclass[2020]{Primary 30C20, 30C62; Secondary 30L10}
\keywords{John domain, weak tangent, Gromov--Hausdorff convergence}

\begin{abstract}
We characterize simply connected John domains in the plane with the aid of weak tangents of the boundary. Specifically, we prove that a bounded simply connected domain $D$ is a John domain if and only if, for every weak tangent $Y$ of $\partial D$, every connected component of the complement of $Y$ that ``originates" from $D$ is a John domain, not necessarily with uniform constants. Our main theorem improves a result of Kinneberg \cite{Kin}, who obtains a necessary condition for a John domain in terms of weak tangents but not a sufficient one. We also establish several properties of weak tangents of John domains.
\end{abstract}

\maketitle

\section{Introduction}

The object of this paper is to characterize simply connected planar John domains by studying weak tangents of the boundary. Intuitively, this means that we zoom into the boundary of the domain at certain points, along with a sequence of scales tending to zero. More precisely, let $X,Y$ be two closed sets in the complex plane $\C$. We say that $Y$ is a \textit{weak tangent} of $X$ if there is a sequence of points $p_n\in X$ and scales $\lambda_n>0$, $n\in \N$, with  $\lambda_n\to 0$ as $n\to\infty$ such that if 
\[X_n=\frac{X-p_n}{\lambda_n}\]
then the sequence of sets $\{X_n\}_{n\in\N}$ converges to $Y$. We explain the precise notion of convergence in Section \ref{prelim}. Note that one can also define weak tangents of arbitrary metric spaces but, for our purposes, we restrict to planar sets. Weak tangents play a significant role in the study of fractal metric spaces since they describe the infinitesimal structure of such spaces. 

Often, one can extract information about the global geometry of a space by studying its infinitesimal structure. For example, weak tangents of the Sierpi\'nski carpet have been studied by Bonk and Merenkov \cite{BonkMer} in connection with the problem of finding all quasisymmetric automorphisms of the carpet. Moreover, weak tangents have been studied by Hakobyan and Li \cite{Hak} in connection with the problem of quasisymmetrically embedding a metric space into Euclidean space.

We focus on the problem of characterizing a metric space by studying its weak tangents. A first motivating result is that a Jordan curve in the plane is a quasicircle if and only if its weak tangents are connected \cite[Theorem 5.3]{Kin}. For higher dimensional objects similar results are false in general. For instance, Wu \cite[Theorem 1.0.10]{Wu} shows that for each $n\geq 2$ there is a metric space $X$ homeomorphic to the $n$-sphere whose weak tangents are isometric to $\R^n$, but $X$ is not a quasisphere (see also \cite[Corollary 4.5]{Li}).

Bonk and Kleiner \cite{Bonk} provide a positive result for topological $n$-spheres $X$ that arise as the boundary at infinity of a Gromov hyperbolic finitely generated group $G$. Specifically, a consequence of \cite[Lemma 5.2]{Bonk} is that such a boundary $X$ is a quasisphere if and only if every weak tangent of $X$ is quasisymmetric to $\R^n$. Such a characterization was also obtained by Wu \cite[Theorem 1.0.9]{Wu} for visual spheres of expanding Thurston maps (see also \cite[Theorem 1.5]{Li}).

In \cite[Corollary 3.8]{Vai}, V\"ais\"al\"a studies uniform domains, a class of domains that contains quasicircles, and among other results he gives a characterization of uniform domains in terms of weak tangents. In this paper we study the weak tangents of a more general class of domains known as John domains. John domains were introduced by F.\ John \cite{Jo} and studied by Martio--Sarvas \cite{MS}. Roughly speaking, a planar domain is a John domain if any two points of the domain can be connected by a curve that is not too close to the boundary. In other words, John domains are not allowed to have outward cusps, but they can have inward cusps.  

John domains arise naturally in distortion problems of conformal and quasiconformal mappings, and in complex dynamics. For example, in \cite{CYZ} Carleson, Jones and Yoccoz give a classification of polynomials whose Fatou components are John domains. In \cite{LR}, Lin and Rhode characterize the laminations of the disk that arise from conformal mappings onto planar dendrites whose complement is a John domain.
Simply connected John domains can be considered as one-sided quasidisks. In fact, Nakki--V\"ais\"al\"a \cite[Theorem 9.3]{Nakki} prove that a Jordan curve in the plane is a quasicircle if and only if both connected components of its complement are John domains. 

By their original definition John domains are bounded. However, later, Nakki--V\"ais\"al\"a \cite{Nakki} extended the definition to allow for unbounded domains. In this paper we adopt their definition from \cite{Nakki}: 

\begin{definition}
Let $D\subset \C$ be a domain and let $A\geq 1$.  We say that $D$ is an \textit{$A$-John domain} if for every pair of distinct points $a,b\in D$ there exists a curve $\gamma\colon [0,1]\to D$ such that  $\gamma(0)=a$, $\gamma(1)=b$ and
$$ \min\{\ell (\gamma|_{[0,t]}), \ell(\gamma|_{[t,1]} )\} \leq A \dist(\gamma(t),\partial D),$$
for all $t\in [0,1]$. In this case $\gamma$ is called an \textit{$A$-John curve} connecting $a$ to $b$. We say that $D$ is a John domain if it is an $A$-John domain for some $A\ge 1$. 
\end{definition}

Kinneberg \cite[Proposition 1.5]{Kin} proved that if $D$ is a bounded simply connected John domain, then the weak tangents of $\partial D$ have a uniformly bounded number of connected components. It is easy to see that the converse is not true. For example, one can construct a Jordan region $D$ with an outward cusp (like a water droplet), which is therefore not a John domain, but which has the property that every weak tangent of $\partial D$ has at most two connected components. Note, however, that the complement of this Jordan region is a John domain. One can easily generalize this construction and allow for both inward and outward cusps while keeping the property that the weak tangents have at most two connected components. Thus, there seems to be no characterization of John domains that takes into account only the geometry of weak tangents $Y$ of $\partial D$, but one has to study as well the connected components of $\C\setminus Y$.
 
For this reason, given a domain $D\subset \C$ and a weak tangent $Y$ of $\partial D$, we wish to distinguish between connected components of $\C\setminus Y$ that ``originate" from $D$ and connected components that do not have this property. If $Y$ is a weak tangent of $\partial D$, 
there is a sequence of points $p_n\in \partial D$ and scales $\lambda_n>0$, $n\in\N$, with $\lambda_n\to 0$ as $n\to\infty$ such that
\[\left\{\frac{\partial D-p_n}{\lambda_n}\right\}_{n\in\N}\]
converges to $Y$ in the local Hausdorff sense, which we define in Section \ref{prelim}. For $n\in \N$ we set 
\[D_n=\frac{D-p_n}{\lambda_n}.\]
We say that a connected component $U$ of $\C\setminus Y$ is a \textit{$D$-component} if there exists $z\in U$ such that $z\in D_n$ for infinitely many $n\in \N$. Note that the weak tangent $Y$ and the $D$-components of $\mathbb C\setminus Y$ depend on the sequences $\lambda_n$ and $p_n$. Our main result gives a necessary and sufficient condition for a bounded simply connected domain $D\subset\C$ to be a John domain by studying the geometry of $D$-components.

\begin{theorem}\label{maintheorem}
Let $D\subset \C$ be a  bounded simply connected domain. The following are equivalent.
\begin{enumerate}[label=\normalfont(\arabic*)]
\item $D$ is a John domain.
\item There exists $A\geq 1$ such that for every weak tangent $Y$ of $\partial D$, every $D$-component of $\C \setminus Y$ is an $A$-John domain.
\item For every weak tangent $Y$ of $\partial D$, every $D$-component of $\C \setminus Y$ is a John domain.
\end{enumerate}
\end{theorem}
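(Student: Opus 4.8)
The plan is to prove $(2)\Rightarrow(3)$ trivially, $(1)\Rightarrow(2)$ by transporting John curves from the rescaled domains $D_n=(D-p_n)/\lambda_n$ to the limit, and $(3)\Rightarrow(1)$ by contraposition, building from a failure of the John condition in $D$ a weak tangent of $\partial D$ possessing a non-John $D$-component. I will use throughout that the John condition is invariant under translations and dilations, so if $D$ is $A$-John then every $D_n$ is $A$-John; that if $U$ is a $D$-component witnessed by $z_0\in U$ and $K\subseteq U$ is compact, connected, and contains $z_0$, then $K\subseteq D_n$ for all large $n$ in the witnessing index set (since $\dist(K,Y)>0$ and $\partial D_n\to Y$ locally in the Hausdorff sense, $\partial D_n$ eventually misses $K$, and a connected set meeting $D_n$ but not $\partial D_n$ lies in $D_n$) — in particular any two points of $U$ lie in $D_n$ for all large $n$; and the limiting principle that a uniformly length-bounded sequence of $A$-John curves in $D_n$, joining points tending to $a,b$ and converging uniformly to a curve $\gamma$ that remains in $U$, has limit an $A$-John curve of $U$ (by lower semicontinuity of length together with $\limsup_n\dist(\gamma_n(t),\partial D_n)\le\dist(\gamma(t),\partial U)$, obtained by approximating a nearest point of $\partial U\subseteq Y$ by points of $\partial D_n$).

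For $(1)\Rightarrow(2)$: assume $D$ is $A_0$-John and fix a weak tangent $Y$ and a $D$-component $U$ with witness $z_0$. Each $D_n$ is bounded and $A_0$-John, hence has a John center $w_n$ with carrot constant $A_1=A_1(A_0)$: every point is joined to $w_n$ by an arclength-parametrized arc $\alpha$ with $\dist(\alpha(s),\partial D_n)\ge s/A_1$. For $x\in U$ (hence $x\in D_n$ for large $n$) choose such a carrot $\alpha_n^x$ from $x$ to $w_n$; its truncation $\alpha_n^x|_{[0,R]}$ is $1$-Lipschitz, lies in $\overline{B}(x,R)$, and — being connected, containing $x\in U$, lying in $D_n$, and kept away from $Y$ by the carrot estimate plus $\partial D_n\to Y$ — lies in $U$ for $n$ large. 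By Arzel\`a--Ascoli and a diagonal argument over $R\to\infty$, a subsequence converges to $\alpha^x\colon[0,\infty)\to U$ with $\alpha^x(0)=x$ and $\dist(\alpha^x(s),\partial U)\ge s/A_1$. Given $a,b\in U$, take $\alpha^a,\alpha^b$; because the approximating carrots in $D_n$ are aimed at the common, receding center $w_n$, their points at arclength $A_1T$ stay within a bounded distance $\rho=\rho(A_0,a,b)$ of each other, so for $T$ large the points $\alpha^a(A_1T),\alpha^b(A_1T)$ — which have depth at least $T$ — can be joined inside $\{\dist(\cdot,\partial U)\ge T/2\}\subseteq U$ by an arc of length $\le\rho$. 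Splicing $\alpha^a|_{[0,A_1T]}$, this arc, and $\alpha^b|_{[0,A_1T]}$ reversed produces, for $T$ large, a curve from $a$ to $b$ in $U$ that is $A$-John with $A=A(A_0)$: on the carrot portions the arclength from the near endpoint is at most $A_1$ times the distance to $\partial U$, while on the short middle arc the distance to $\partial U$ exceeds $T/2$ and each arclength half is at most about $A_1T$. As $A$ depends only on $A_0$, (2) follows.

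For $(3)\Rightarrow(1)$, I argue the contrapositive, using the quasihyperbolic metric. Suppose $D$ is not a John domain; for each $k$ there are $a_k,b_k\in D$ with no $k$-John curve, so the quasihyperbolic geodesic $\sigma_k$ from $a_k$ to $b_k$ (which exists in any domain) is not one: there is $q_k=\sigma_k(t_k)$ with $\min\{\ell(\sigma_k|_{[0,t_k]}),\ell(\sigma_k|_{[t_k,1]})\}>k\,\dist(q_k,\partial D)$, and by choosing $q_k$ to essentially maximize this defect along $\sigma_k$ one may arrange that $\sigma_k$ stays at depth at least $c\,r_k$ throughout a neighbourhood of $q_k$ of radius comparable to $r_k:=\dist(q_k,\partial D)$; a routine reduction gives $r_k\to0$. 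Let $p_k\in\partial D$ with $|p_k-q_k|=r_k$, put $D_k=(D-p_k)/r_k$, and pass to a subsequence so that $\partial D_k\to Y$ locally Hausdorff — a weak tangent of $\partial D$ — and $\tilde q_k:=(q_k-p_k)/r_k\to q_\infty$ with $\dist(q_\infty,Y)=1$. Since $\dist(\tilde q_k,\partial D_k)=1$ forces $B(\tilde q_k,1)\subseteq D_k$, we get $q_\infty\in D_k$ for $k$ large, so the component $U$ of $\C\setminus Y$ containing $q_\infty$ is a $D$-component, and $\dist(q_\infty,\partial U)=1$. Each $\tilde\sigma_k=(\sigma_k-p_k)/r_k$ is a quasihyperbolic geodesic in $D_k$ (the quasihyperbolic metric is dilation invariant) through $\tilde q_k$ whose two arcs from $\tilde q_k$ have Euclidean length at least $k$; restricting to arclength windows about $\tilde q_k$, applying Arzel\`a--Ascoli together with the quasihyperbolic estimates that keep those windows at positive depth uniformly, and diagonalizing, one extracts a bi-infinite quasihyperbolic geodesic $\sigma_\infty\colon\R\to U$ (limits of quasihyperbolic geodesics are quasihyperbolic geodesics, since $\dist(\cdot,\partial D_k)\to\dist(\cdot,\partial U)$ on $U$) with $\sigma_\infty(0)=q_\infty$. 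If $U$ were an $A'$-John domain, applying the John definition to the finite geodesic $\sigma_\infty|_{[-s,s]}$, whose two arclength halves each have length $s$, would give $s\le A'\dist(q_\infty,\partial U)=A'$ for every $s>0$ — absurd. Hence $U$ is not a John domain, contradicting (3).

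The main obstacle, common to both nontrivial implications, is the loss of compactness when passing curves to the limit. In $(1)\Rightarrow(2)$ the John centers $w_n$ run off to infinity, so the straightforward $a$-to-$b$ John curves in $D_n$ may have unbounded length and cannot simply be taken as a limit; the cure is to limit only the two carrots (which do converge) and splice them at a uniformly deep level, which forces one to establish the quantitative statement that carrots aimed at a common faraway center stay uniformly close. In $(3)\Rightarrow(1)$ one must guarantee that the limiting quasihyperbolic geodesic stays inside the single component $U$ and genuinely keeps infinite — hence ``unbounded-defect'' — length on both sides of $q_\infty$, which is exactly what makes $U$ honestly fail the John condition rather than merely be John with a poor constant; this rests on the extremal choice of $q_k$ and on standard quasihyperbolic-geometry estimates controlling the approach of geodesics to the boundary.
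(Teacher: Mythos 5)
Your overall architecture (trivial $(2)\Rightarrow(3)$, limiting argument for $(1)\Rightarrow(2)$, contraposition with rescaled quasihyperbolic geodesics for $(3)\Rightarrow(1)$) parallels the paper, but both nontrivial implications contain genuine gaps. In $(1)\Rightarrow(2)$, the splicing step rests on a false claim: two carrot arcs in $D_n$ aimed at the same John center need not stay within a bounded distance $\rho(A_0,a,b)$ of each other at matched arclength. Take $D=\D\setminus[0,1)$ and rescale at the slit tip $p_n=0$: then $D_n$ is a huge disk minus a long slit, $U=\C\setminus[0,\infty)$, and perfectly admissible carrots from $a=i$ and $b=-i$ toward the far-away center may head into the upper and lower half planes respectively, so their points at arclength $s$ separate like $2s$. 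Without that claim your middle connecting arc is uncontrolled, and the weaker statement you actually need (the two deep points can be joined at depth $\gtrsim T$ by a curve whose length is controlled in terms of $T$, uniformly in $n$, in a way that survives the limit) is not established; note also that the $a$-to-$b$ John curves of $D_n$ themselves carry no a priori length bound, which is exactly the compactness problem. The paper sidesteps all of this: it runs Riemann maps $f_n\colon\D\to D_{k_n}$, uses Carath\'eodory kernel convergence (Lemma \ref{d_componentcompact}), and takes as John curves the hyperbolic geodesics, which by N\"akki--V\"ais\"al\"a are $A$-John curves in $A$-John disks and converge together with $f_n$; this is what makes the constant $A$ pass to the limit cleanly.

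In $(3)\Rightarrow(1)$ the final step is a misapplication of the definition: the John condition asserts that \emph{some} curve joining $\sigma_\infty(-s)$ to $\sigma_\infty(s)$ is an $A'$-John curve, so you cannot ``apply the John definition'' to the particular curve $\sigma_\infty|_{[-s,s]}$ and conclude $s\le A'$. To argue that way you would need the nontrivial theorem that quasihyperbolic geodesics in John domains are John curves, which is a result for \emph{simply connected} plane domains, and you have neither invoked it nor shown that $U$ is simply connected (the paper's Lemma \ref{dcompounbounded} is stated under the John hypothesis you are denying, so this needs its own argument). In addition, the extraction of a bi-infinite geodesic lying in $U$ with infinite length on both sides is only gestured at (``essentially maximize the defect'', ``limits of quasihyperbolic geodesics are quasihyperbolic geodesics''); the paper instead chooses $y_n$ to minimize the boundary distance on the middle window $[t_1,t_2]$, obtaining $N_{1/2}(|\Gamma_n|)\subset D_n$ and $\diam\ge n-1$ on both sides, takes Blaschke limits of the two arms, and then — this is the step your shortcut omits — uses the Buckley--Koskela separation lemma (Lemma \ref{separation}) to force \emph{every} curve in $U$ joining far points of the two arms through a fixed ball $B(0,C_1)$, which contradicts the diameter form of the John condition (N\"akki--V\"ais\"al\"a, Theorem 2.14). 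Some version of such a separation argument (or of the geodesics-are-John-curves theorem plus simple connectivity of $U$) is indispensable; as written, your proof does not close. Minor points: $r_k\to0$ uses boundedness of $D$ (as in the paper, $\lambda_n\le\diam D/n$), so it is not purely ``routine'', and your convergence of quasihyperbolic geodesics to a geodesic of $U$ also needs the observation that compact subsets of $U$ eventually lie in $D_{k_n}$ to compare with competitor curves.
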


\begin{remark}
In (3) we do not require that $D$-components are John domains with a uniform constant. Moreover, if $D$ is an $A$-John domain in (1) then we actually prove that (2) holds for the same constant $A$. However, the implication from (2) to (1) is not quantitative. For example, a domain $D$ bounded by a smooth Jordan curve is always a John domain, but not necessarily with a small constant $A$, while the weak tangents of $\partial D$ are actual tangent lines.
\end{remark}

\begin{remark} The theorem is not true for unbounded domains. Actually, the implication from (1) to (3) (or (2)) holds even if $D$ is unbounded but the converse does not. For example, consider the domain 
$$D=\C\setminus ((-\infty,-1] \cup [1,\infty)),$$ 
which is not a John domain. Yet, every weak tangent $Y$ of $\partial D$ is either a line or a ray and the connected components of $\C\setminus Y$ are John domains. 
\end{remark}

The proof of Theorem \ref{maintheorem} is given in Section \ref{mainthproof} and the most involved implication is that (3) implies (1). In Section \ref{propertieswt}, we establish some properties of weak tangents of John domains.

\begin{theorem}\label{sum}
Let $D\subset \C$ be a bounded simply connected John domain and let $Y$ be a weak tangent of $\partial D$. The following statements are true.
\begin{enumerate}[label=\normalfont(\arabic*)]
	\item Every $D$-component of $\C\setminus Y$ is unbounded and simply connected.
	\item Every connected component of $Y$ is unbounded. 
	\item $Y$ is equal to the union of the boundaries of $D$-components of $\C\setminus Y$.
	\item The number of $D$-components of $\C\setminus Y$ is bounded, quantitatively.
	\item The number of connected components of $Y$ is bounded, quantitatively.
\end{enumerate}
\end{theorem}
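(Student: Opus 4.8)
My plan is to prove (2) and (1) first, then (4), and then (3), which uses the finiteness of $D$-components obtained in (4), itself built on the unboundedness in (1); statement (5) is \cite[Proposition 1.5]{Kin}, which I would simply quote. Everything rests on two soft tools attached to the local Hausdorff convergence $\partial D_n\to Y$, together with the rescaling facts that $0\in\partial D_n$, that $\partial D_n$ is a continuum (as $D$ is bounded and simply connected), and that $\diam(D_n)=\diam(D)/\lambda_n\to\infty$, so $\overline{D_n}$ contains points of modulus at least $\tfrac12\diam(D_n)\to\infty$. The first tool is a \emph{stability lemma}: if $w_n\to w$ and $\dist(w_n,\partial D_n)\geq\delta$ for all large $n$, then $\dist(w,Y)\geq\delta$; and, conversely, if $U$ is a $D$-component with witness $z$, then $B\bigl(z,\tfrac12\dist(z,Y)\bigr)\subseteq D_n$ for all large $n$ along the witnessing sequence, because $\partial D_n$ cannot approach $z$ without leaving a point of $Y$ nearby. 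The second tool is a \emph{carrot estimate}, read off from the two-point John condition: if $A$ is a John constant of $D$, then for every $p\in\partial D_n$ and $0<r<\tfrac12\diam(D_n)$ there is $x\in D_n$ with $|x-p|<r$ and $\dist(x,\partial D_n)\geq r/(4A)$ (connect an interior point near $p$ to a far interior point and stop partway along the John curve).

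For (2) I would argue by contradiction using only that $\partial D_n$ is a continuum. If some component $C$ of $Y$ were contained in $B(0,R_0)$, fix $R>R_0$: for large $n$, $\partial D_n$ is a continuum not contained in $\overline{B(0,R)}$, yet it meets $B(0,R)$ near $C$ (since $\partial D_n\to Y\supseteq C$), so the boundary bumping theorem forces the component $Q_n$ of $\partial D_n\cap B(0,R)$ meeting a fixed neighbourhood of $C$ to also reach $\{|z|=R\}$. A Hausdorff-convergent subsequence of the $Q_n$ has a limit $Q$ that is a continuum contained in $Y$, meets $C$, and meets $\{|z|=R\}$; being connected and hitting the component $C$ of $Y$ it must lie in $C\subseteq B(0,R_0)$, which is incompatible with its reaching radius $R>R_0$. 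Hence every component of $Y$ is unbounded.

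For (1), fix a $D$-component $U$ with witness $z$ and let $S$ be the infinite set of indices with $z\in D_n$. The stability lemma identifies $U$ as the Carath\'eodory kernel of the bounded simply connected domains $(D_n)_{n\in S}$ with respect to $z$: a compact connected set joining $z$ to a point of $U$ has positive distance to $Y$, hence lies in $D_n$ for all large $n\in S$, while the kernel cannot meet $Y$; the same holds along every subsequence, so $D_n\to U$ in the kernel sense, and since $0\in Y$ gives $U\neq\C$, the Carath\'eodory kernel theorem shows $U$ is simply connected. For unboundedness, pick $w_n\in D_n$ with $|w_n|\to\infty$ and, for $n\in S$, an arclength-parametrized $A$-John curve $\gamma_n$ from $z$ to $w_n$ in $D_n$; its length $\ell_n\geq|w_n-z|\to\infty$, so the John condition gives $\dist(\gamma_n(s),\partial D_n)\geq s/A$ for $s\leq\ell_n/2$. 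An Arzel\`a--Ascoli and diagonal argument extracts a $1$-Lipschitz limit $\gamma\colon[0,\infty)\to\C$ with $\gamma(0)=z$, and the stability lemma upgrades this to $\dist(\gamma(s),Y)\geq s/A$ for all $s$; thus $\gamma$ stays in $U$ and $|\gamma(s)|\geq\dist(\gamma(s),Y)\to\infty$ (using $0\in Y$), so $U$ is unbounded. The same construction yields (4): for any distinct $D$-components $U_1,\dots,U_m$ with witnesses $z_i$, each $U_i$ contains a ``John ray'' $\gamma_i$ from $z_i$ with $\dist(\gamma_i(s),\partial U_i)\geq\dist(\gamma_i(s),Y)\geq s/A$ (as $\partial U_i\subseteq Y$), hence $B(\gamma_i(R),R/(2A))\subseteq U_i$ for every $R$; for $R\geq\max_i|z_i|$ these are pairwise disjoint balls of radius $R/(2A)$ inside $B(0,3R)$, so a volume count gives $m\leq 36A^2$.

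Finally, for (3): the inclusion $\bigcup_U\partial U\subseteq Y$ is immediate, so fix $y\in Y$ and choose $y_n\in\partial D_n$ with $y_n\to y$. For each $k$, the carrot estimate at $y_n$ with radius $2^{-k}$ (valid for large $n$) gives $x_{n,k}\in D_n$ within $2^{-k}$ of $y_n$ with $\dist(x_{n,k},\partial D_n)\geq 2^{-k}/(4A)$; sending $n\to\infty$ along a subsequence and using the stability lemma produces $\xi_k$ with $|\xi_k-y|\leq 2^{-k}$, $\dist(\xi_k,Y)>0$, and $\xi_k\in D_n$ for infinitely many $n$, so $\xi_k$ lies in some $D$-component $U^{(k)}$. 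As $\xi_k\to y$ and, by (4), there are only finitely many $D$-components, some $D$-component $U$ satisfies $U^{(k)}=U$ for infinitely many $k$; then $y\in\overline U$, and since $y\in Y$ while $U\cap Y=\varnothing$ we get $y\in\partial U$, completing (3). The delicate thread throughout is converting the boundary convergence $\partial D_n\to Y$ into statements about the domains $D_n$ — precisely the role of the $D$-component notion — and I expect the two places where this bites, namely pinning down $U$ as the Carath\'eodory kernel in (1) and controlling where the John-curve limits land in (1) and (4), to require the most care; statement (2), by contrast, is essentially point-set topology.
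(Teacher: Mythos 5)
Your argument is correct, and while its skeleton (distance stability under local Hausdorff convergence, the kernel/compact-exhaustion property of $D$-components, John curves in $D_n$ passed to the limit, and a pigeonhole over finitely many $D$-components for (3)) parallels the paper, several ingredients are genuinely different. For (2) you use boundary bumping on the continua $\partial D_n$ in place of the paper's two-case argument via Zoretti's theorem (Lemma \ref{unboundedY}); both are soft point-set topology and of comparable weight. For simple connectivity in (1) you identify $U$ as the Carath\'eodory kernel (this is exactly Lemma \ref{d_componentcompact}) and invoke the kernel theorem \cite{Gol}, whereas the paper gives a direct Jordan-region argument in Lemma \ref{dcompounbounded}; your route is fine since the paper uses the same kernel machinery anyway in Proposition \ref{propmain1}. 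For unboundedness you build an honest ``John ray'' with $\dist(\gamma(s),Y)\ge s/A$ via Arzel\`a--Ascoli, which is equivalent to the paper's Blaschke-limit sets $E\subset \overline B(z,R)$ but buys you a cleaner proof of (4): your disjoint-ball area count needs only the John property of $D$ and gives $m\le 36A^2$, while the paper's Lemma \ref{upperbound} counts arcs on a circle to get the sharper bound $4\pi A$ but needs Proposition \ref{propmain1} (each $D$-component is $A$-John) first. In (3) your elementary ``carrot estimate'' from the two-point John condition replaces the paper's appeal to \cite[Theorem 6.5]{Nakki} (John curves with boundary endpoints) and also removes the dependence of (3) on (2); the limiting and pigeonhole steps are then the same as in Lemma \ref{components}. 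The one place you give no argument is (5): quoting Kinneberg \cite{Kin} is legitimate (his Proposition 1.5/Theorem 5.6 is literally the statement), but it misses the point the paper makes, namely that (5) follows internally from (1)--(4): by (3) each component of $Y$ contains a boundary component of some $D$-component, each $D$-component is a simply connected $A$-John domain (Proposition \ref{propmain1}, or Theorem \ref{maintheorem}), a simply connected $A$-John domain has at most $4\pi A$ boundary components (Lemma \ref{johnboundcomp}), and (4) bounds the number of $D$-components, yielding the explicit bound of Corollary \ref{corol}; with your machinery the same chain goes through verbatim (with $36A^2$ in place of $4\pi A$ for the count of $D$-components), so you could easily make (5) self-contained as well.
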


If $D$ is an $A$-John domain, in (4) and (5) the term ``quantitatively" means that the relevant constant depends only on $A$. Properties (1)--(5) are established in Lemma \ref{dcompounbounded}, Lemma \ref{unboundedY}, Lemma \ref{components}, Lemma \ref{upperbound} and Corollary \ref{corol}, respectively. We remark that (5) was also obtained by Kinneberg \cite[Theorem 5.6]{Kin} through the notion of the discrete UWS property (uniformly well-spread discrete cut points). Here, it follows from Theorem \ref{maintheorem} and properties (1)--(4). We remark that, given a bounded simply connected domain $D\subset \C$ and a weak tangent $Y$ of $\partial D$, none of conditions (1)--(5) imply that $D$ is a John domain. This can be easily seen by considering, for example, a Jordan region with an outward cusp (like a water droplet). Note that all of our proofs rely on planar techniques. An interesting question is whether any of the preceding results remain valid for John domains in $\R^n$.

\section{Preliminaries}\label{prelim}

\subsection{Convergence of metric spaces}

A \textit{pointed metric space} $(Y,d,p)$ is a metric space $(Y,d)$ along with a distinguished base-point $p\in Y$. For $R>0$ and $a\in Y$, we set $B_d(a,R)=\{y\in Y: d(y,a)<R\}$. When it is clear which metric is used, we denote the ball simply by $B(a,R)$.  We give the following definition according to \cite[Section 8]{Bur}.

\begin{definition} A sequence of pointed metric spaces $\{(Y_n,d_n,p_n)\}_{n\in\N}$ converges to a pointed metric space $(Y,d,p)$ in the \textit{(pointed) Gromov--Hausdorff sense} if the following holds: For every $\varepsilon>0$ and $R>0$, there is $N\in \N$ such that for every $n\ge N$, there is a mapping $f_n\colon B_{d_n}(p_n,R)\to Y$ for which
\begin{enumerate}
\item $f_n(p_n)=p$,
\item $|d_n(x,y)-d(f_n(x),f_n(y))|<\varepsilon$ for each pair $x,y\in B_{d_n}(p_n,R)$ and
\item $B_{d}(p,R-\varepsilon)\subset N_{\varepsilon}(f_n(B_{d_n}(p_n,R)))$.
\end{enumerate} 
\end{definition}

Recall that the Hausdorff distance between two sets $E,F$ is defined to be the infimum of $r>0$ such that $E$ is contained in the open $r$-neighborhood $N_r(F)$ of $F$ and $F$ is contained in $N_r(E)$.
For our purposes we will need the following modification of Hausdorff convergence of subsets of a metric space.
\begin{definition}
Let $(Z,d)$ be a metric space. For $n\in \N$ let $Y_n,Y\subset Z$ be closed sets. We say that the sequence $\{Y_n\}_{n\in \N}$ converges in the \textit{local Hausdorff sense} to the set $Y$ if for each point $z_0\in Z$ and for each $R,\varepsilon>0$ there exists $N\in \N$ such that for every $n\geq N$ we have
\begin{align*}
&Y_n\cap B(z_0,R-\varepsilon)\subset N_\varepsilon(Y\cap B(z_0,R))\,\,\, \textrm{and}\,\,\,Y\cap B(z_0,R-\varepsilon)\subset N_\varepsilon(Y_n\cap B(z_0,R)). 
\end{align*}
\end{definition}

\begin{remark}
The definition of local Hausdorff convergence does not require that $Y_n\cap B(z_0,R)$ converges in the Hausdorff sense to $Y\cap B(z_0,R)$ and it is important to consider instead balls of radius $R-\varepsilon$. For example, if $Y_n=\{1/n\}\subset \R$, $n\in \N$, and $Y=\{0\}\subset \R$, then $Y_n\to Y$ in the (local) Hausdorff sense. However, $B(1,1)=(0,2)$ and $Y\cap (0,2)=\emptyset$, so $Y_n\cap (0,2)$ does not converge to $Y\cap (0,2)$ in the Hausdorff sense.
\end{remark}

\begin{lemma}\label{distconvergence}
Let $(Z,d)$ be a metric space. For $n\in \N$ let $Y_n,Y\subset Z$ be closed sets and let $z_n,z\in Z$. Suppose that $\{Y_n\}_{n\in \N}$ converges in the {local Hausdorff sense} to $Y$ and $d(z_n,z)\to 0$ as $n\to \infty$. Then $\dist(z_n,Y_n)\to \dist(z,Y)$ as $n\to \infty$.
\end{lemma}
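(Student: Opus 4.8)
The plan is to establish separately the two one-sided estimates
\[\limsup_{n\to\infty}\dist(z_n,Y_n)\le \dist(z,Y)\quad\text{and}\quad \liminf_{n\to\infty}\dist(z_n,Y_n)\ge \dist(z,Y);\]
together they give the assertion. Write $D:=\dist(z,Y)$. First I would dispose of the degenerate case $Y=\emptyset$: then $D=\infty$, and applying the definition of local Hausdorff convergence with the fixed base point $z$, an arbitrary radius, and tolerance $1$ forces $Y_n\cap B(z,R)=\emptyset$ for every $R>0$ once $n$ is large, so $\dist(z_n,Y_n)\ge \dist(z,Y_n)-d(z,z_n)\to\infty=D$. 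In the remaining case the same reasoning (applied now with a base point lying in $Y$) shows that $Y_n\ne\emptyset$ for all large $n$, so from that point on every distance in question is finite.

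For the upper estimate, fix $\varepsilon>0$ and pick $y\in Y$ with $d(z,y)<D+\varepsilon$. I would apply the definition of local Hausdorff convergence with base point $z_0=z$, radius $R=D+3\varepsilon$, and tolerance $\varepsilon$. Since $d(z,y)<R-\varepsilon$, the point $y$ lies in $Y\cap B(z,R-\varepsilon)$, hence for all large $n$ there is $y_n\in Y_n$ with $d(y,y_n)<\varepsilon$, and therefore
\[\dist(z_n,Y_n)\le d(z_n,y_n)\le d(z_n,z)+d(z,y)+d(y,y_n)<d(z_n,z)+D+2\varepsilon.\]
Since $d(z_n,z)\to0$, this yields $\limsup_{n\to\infty}\dist(z_n,Y_n)\le D+2\varepsilon$, and letting $\varepsilon\to0$ completes this half.

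For the lower estimate we may assume $D>0$. Fix $\varepsilon\in(0,D)$ and apply the definition of local Hausdorff convergence with base point $z_0=z$, radius $R=D$, and tolerance $\varepsilon/2$, obtaining an index $N$, which I would enlarge so that also $d(z_n,z)<\varepsilon/2$ for $n\ge N$. I claim $\dist(z_n,Y_n)\ge D-\varepsilon$ for every $n\ge N$. Indeed, if $\dist(z_n,Y_n)<D-\varepsilon$ then there is $w\in Y_n$ with $d(z_n,w)<D-\varepsilon$, whence $d(z,w)<d(z,z_n)+d(z_n,w)<D-\varepsilon/2$, so $w\in Y_n\cap B(z,R-\varepsilon/2)\subset N_{\varepsilon/2}(Y\cap B(z,R))$. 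Thus there is $y'\in Y$ with $d(z,y')<R=D$ and $d(w,y')<\varepsilon/2$, giving $\dist(z,Y)\le d(z,y')<D$, a contradiction. Hence $\liminf_{n\to\infty}\dist(z_n,Y_n)\ge D-\varepsilon$, and letting $\varepsilon\to0$ finishes the proof.

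The only genuinely delicate point --- and precisely what the $R-\varepsilon$ versus $R$ asymmetry in the definition of local Hausdorff convergence is designed to accommodate --- is the choice of the radius $R$: it must be taken strictly larger than the distance from the base point $z$ to the relevant witness point (the near-minimizer $y\in Y$ in the upper bound, the point $w\in Y_n$ in the lower bound), so that after absorbing the perturbation $d(z_n,z)\to 0$ into a shrunken tolerance the witness point still lies in the smaller ball $B(z,R-\varepsilon)$ and the defining inclusion can be invoked. Everything else is an application of the triangle inequality.
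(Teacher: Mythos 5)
Your proof is correct and follows essentially the same route as the paper: both argue the two one-sided estimates directly from the two inclusions in the definition of local Hausdorff convergence, choosing radii slightly larger (resp.\ equal) to $\dist(z,Y)$ and absorbing the perturbation $d(z_n,z)\to 0$; the paper phrases this by passing between balls centered at $z$ and at $z_n$, while you keep the base point $z$ and use the triangle inequality, which is an immaterial difference. Your explicit treatment of the case $Y=\emptyset$ is a small extra the paper leaves implicit.
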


\begin{proof} 
Let $R=\dist(z,Y)\geq 0$ and $\varepsilon>0$. We have
$$\emptyset\neq Y\cap B(z,R+\varepsilon) \subset N_\varepsilon(Y_n\cap B(z,R+2\varepsilon))\subset N_\varepsilon(Y_n\cap B(z_n,R+3\varepsilon))$$
for all sufficiently large $n\in \N$. This shows that $Y_n\cap B(z_n,R+3\varepsilon)\neq \emptyset$ and thus
\begin{equation}\label{distlessthanR}
\dist(z_n,Y_n)< R+3\varepsilon
\end{equation}
for all sufficiently large $n\in \N$. In particular, if $R=0$, then $\dist(z_n,Y_n)\to 0$ as $n\to \infty$. Now, suppose that $R>0$ and $0<\varepsilon<R/2$. We have $Y\cap B(z,R)=\emptyset$ and hence, for all sufficiently large $n\in \N$,
$$Y_n\cap B(z_n,R-2\varepsilon)\subset  Y_n\cap B(z,R-\varepsilon) \subset N_\varepsilon (Y\cap B(z,R))=\emptyset.$$
Therefore, $\dist(z_n,Y_n)\geq R-2\varepsilon$ for all large enough $n\in \N$. This in combination with \eqref{distlessthanR} implies that $\dist(z_n,Y_n)\to \dist(z,Y)$ as $n\to \infty$.
\end{proof}

\begin{proposition}\label{equivalencelimitpoints}
Let $(Z,d)$ be a proper metric space. For $n\in \N$ let $Y_n,Y\subset Z$ be closed sets. The following are equivalent.
\begin{enumerate}[label=\normalfont(\arabic*)]
\item $\{Y_n\}_{n\in \N}$ converges to $Y$ in the local Hausdorff sense.
\item There exists a point $z_0\in Z$ such that for each $R,\varepsilon>0$ there exists $N\in \N$ such that for every $n\geq N$ we have
\begin{align*}
&Y_n\cap B(z_0,R-\varepsilon)\subset N_\varepsilon(Y\cap B(z_0,R))\,\,\, \textrm{and}\,\,\,Y\cap B(z_0,R-\varepsilon)\subset N_\varepsilon(Y_n\cap B(z_0,R)). 
\end{align*}
\item For each $y\in Y$ there exists a sequence $y_n\in Y_n$, $n\in \N$, that converges to $y$ and if $y\in Z$ is a limit point of a sequence $y_n\in Y_n$, $n\in \N$, then $y\in Y$. 
\end{enumerate}  
\end{proposition}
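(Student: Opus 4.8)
The plan is to prove the cyclic implications $(1)\Rightarrow(2)\Rightarrow(3)\Rightarrow(1)$, arranging things so that properness of $(Z,d)$ is needed only in the last step. The implication $(1)\Rightarrow(2)$ is immediate, as (2) is just (1) for one fixed base point. For $(2)\Rightarrow(3)$, fix the base point $z_0$ provided by (2). Given $y\in Y$, apply (2) with radius $R>d(z_0,y)+1$ and errors $\varepsilon=1/k$: since $y\in Y\cap B(z_0,R-1/k)$ for large $k$, the inclusion $Y\cap B(z_0,R-1/k)\subset N_{1/k}(Y_n\cap B(z_0,R))$ yields, for all $n\ge N_k$, a point of $Y_n$ within $1/k$ of $y$, and a diagonalization over $k$ produces a sequence $y_n\in Y_n$ with $y_n\to y$ (with $y_n\in Y_n$ chosen arbitrarily for the finitely many remaining indices, which is harmless once $Y\ne\emptyset$). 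For the second assertion of (3), suppose $y_{n_k}\to y$ with $y_{n_k}\in Y_{n_k}$ but $y\notin Y$; then $r:=\dist(y,Y)>0$ since $Y$ is closed, and applying the inclusion $Y_n\cap B(z_0,R-\varepsilon)\subset N_\varepsilon(Y\cap B(z_0,R))$ from (2) with $R=d(z_0,y)+r$ and $\varepsilon<r/2$ shows that, for large $k$, $y_{n_k}$ lies within $\varepsilon$ of a point of $Y$; letting $k\to\infty$ gives $\dist(y,Y)\le\varepsilon<r$, a contradiction.

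The substantial step is $(3)\Rightarrow(1)$, where properness enters through compactness of the closed ball $\br{B(z_0,R-\varepsilon)}$. Fix $z_0$ and $R,\varepsilon>0$ and argue the two required inclusions by contradiction. If $Y\cap B(z_0,R-\varepsilon)\not\subset N_\varepsilon(Y_n\cap B(z_0,R))$ for infinitely many $n$, choose such $n_k$ and points $y_{n_k}\in Y\cap B(z_0,R-\varepsilon)$ with $\dist(y_{n_k},Y_{n_k}\cap B(z_0,R))\ge\varepsilon$; by compactness pass to a subsequence with $y_{n_{k_j}}\to y$, noting $y\in Y$ (as $Y$ is closed) and $d(z_0,y)\le R-\varepsilon$. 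By the first assertion of (3) there is $w_n\in Y_n$ with $w_n\to y$, so for large $j$ we have $w_{n_{k_j}}\in Y_{n_{k_j}}\cap B(z_0,R)$ and $d(y_{n_{k_j}},w_{n_{k_j}})<\varepsilon$, contradicting the choice of the $y_{n_k}$. Symmetrically, if $Y_n\cap B(z_0,R-\varepsilon)\not\subset N_\varepsilon(Y\cap B(z_0,R))$ for infinitely many $n$, choose $x_{n_k}\in Y_{n_k}\cap B(z_0,R-\varepsilon)$ with $\dist(x_{n_k},Y\cap B(z_0,R))\ge\varepsilon$ and, by compactness, a subsequence $x_{n_{k_j}}\to x$ with $d(z_0,x)\le R-\varepsilon$. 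Completing $(x_{n_{k_j}})_j$ to a full sequence in the $Y_n$ (arbitrary choices off the subsequence) exhibits $x$ as a limit point of a sequence drawn from the $Y_n$, so the second assertion of (3) forces $x\in Y$; hence $x\in Y\cap B(z_0,R)$ and $\dist(x_{n_{k_j}},Y\cap B(z_0,R))\le d(x_{n_{k_j}},x)\to 0$, contradicting the choice of the $x_{n_k}$.

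I expect the only genuine obstacle to be the bookkeeping in $(3)\Rightarrow(1)$: one must track the two radii $R-\varepsilon$ and $R$ carefully so that the limit points $y$ and $x$ land in the open ball $B(z_0,R)$, where the quantities $\dist(\,\cdot\,,\,\cdot\cap B(z_0,R))$ are the ones being controlled, and one must take a little care in promoting a subsequence valued in $\bigcup_n Y_n$ to a genuine sequence $(y_n)$ with $y_n\in Y_n$, $n\in\N$, before invoking the second part of (3); in the degenerate cases $Y=\emptyset$ or $Y_n=\emptyset$ for small $n$ one reads the assertions with the usual ``eventually'' convention. Apart from this, every estimate is a direct application of the triangle inequality. (Once the cycle is established, Lemma \ref{distconvergence} also gives an alternative route to the second assertion of (3), but the hands-on argument above avoids the detour.)
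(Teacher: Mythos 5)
Your proposal is correct and follows essentially the same route as the paper: (1)$\Rightarrow$(2) is immediate, (2)$\Rightarrow$(3) is proved by shrinking $\varepsilon=1/k$ together with the closedness of $Y$, and (3)$\Rightarrow$(1) is proved by contradiction, using properness to extract convergent subsequences from points violating one of the two inclusions. The only differences are cosmetic: you phrase the second half of (2)$\Rightarrow$(3) as a contradiction with $r=\dist(y,Y)$ rather than the paper's ``$y\in N_{2\varepsilon}(Y\cap B(z_0,R))$ for all $\varepsilon$'' argument, and in the second case of (3)$\Rightarrow$(1) you handle the $n$-dependent witnesses by a compactness step (arguably a bit more carefully than the paper, which fixes a single point $y$), so no further comment is needed.
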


\begin{proof}
Suppose (2) is true. Let $y\in Y$. Let $R>2$ such that $y\in B(z_0,R/2)$. For each $k\in \N$ there exists $N_k\in \N$ such that 
\[y\in Y\cap B(z_0,R-1/k)\subset N_{1/k}(Y_n\cap B(z_0,R))\]
for all $n\geq N_k$. This implies that there exists a sequence $y_n\in Y_n$, $n\in \N$, that converges to $y$. Next, let $y_n\in Y_n$, $n\in \N$, be a sequence that has a subsequence $y_{k_n}$, $n\in \N$, converging to a point $y\in Z$. Let $\varepsilon\in (0,1)$ be arbitrary and fix $R>4$ such that $y\in B(z_0,R/2)$. For all sufficiently large $n\in \N$ we have $d(y_{k_n},y)<\varepsilon$, so $y_{k_n}\in B(z_0,R/2+\varepsilon)\subset B(z_0,R-\varepsilon)$. By assumption, $y_{k_n}\in N_{\varepsilon}(Y\cap B(z_0,R))$ for all sufficiently large $n\in \N$, which implies that $y\in N_{2\varepsilon}(Y\cap B(z_0,R))$.  Since $\varepsilon$ is arbitrary, the fact that $Y$ is closed implies that $y\in Y$. Thus, we have verified (3).

Now suppose that (3) is true and suppose that (1) fails. That is, there exist $z_0\in Z$, $R>0$, and $\varepsilon>0$ such that $Y_n\cap B(z_0,R-\varepsilon)\not\subset N_\varepsilon(Y\cap B(z_0,R))$ for infinitely many $n\in \N$ or $Y\cap B(z_0,R-\varepsilon)\not\subset N_\varepsilon(Y_n\cap B(z_0,R))$ for infinitely many $n\in \N$. In the first case we find a sequence $y_{k_n}\in Y_n\cap B(z_0,R-\varepsilon)$ such that $\dist(y_{k_n}, Y\cap B(z_0,R))\geq \varepsilon$ for all $n\in \N$. Since $Z$ is proper, after passing to a further subsequence, we may assume that $y_{k_n}$ converges to a point $y\in \overline B (z_0,R-\varepsilon)$. By the assumption in (3), we have $y\in Y$. Thus, $y\in Y\cap B(z_0,R)$, a contradiction to the condition $\dist(y_{k_n}, Y\cap B(z_0,R))\geq \varepsilon$. In the second case there exists a point $y\in Y\cap B(z_0,R-\varepsilon)$ such that $\dist(y, Y_n\cap B(z_0,R))\geq \varepsilon$ for infinitely many $n\in \N$. By the assumption in (3) there exists a sequence $y_n\in Y_n$ converging to $y$. In particular, since $y\in B(z_0,R-\varepsilon)$ we have $y_n\in Y_n\cap B(z_0,R)$ for all sufficiently large $n\in \N$. This contradicts the condition that $\dist(y,Y_n\cap B(z_0,R))\geq \varepsilon$ for infinitely many $n\in \N$. So, we conclude that (1) is true.
\end{proof}

Hausdorff convergence implies local Hausdorff convergence but the converse is not always true. However, if $Z$ is a compact metric space the two notions agree, as a consequence of part (3) of the above proposition.
\begin{corollary}
If $(Z,d)$ is a compact metric space, then local Hausdorff convergence and Hausdorff convergence of subsets of $Z$ are equivalent.
\end{corollary}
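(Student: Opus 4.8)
The key observation is that a compact metric space is bounded, hence is itself an open ball of finite radius, and the plan is to use this to upgrade ``local'' Hausdorff convergence to ordinary Hausdorff convergence in a single step. One direction is immediate and was already noted in the text: Hausdorff convergence implies local Hausdorff convergence, because in each of the two containments defining the latter the left-hand ball $B(z_0,R-\varepsilon)$ is smaller than $B(z_0,R)$, so the conditions are only weakened. Thus it remains to show that on a compact $Z$ local Hausdorff convergence implies Hausdorff convergence.

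So let $Z$ be compact, assume $\{Y_n\}_{n\in\N}$ converges to $Y$ in the local Hausdorff sense, and fix $\varepsilon>0$. Since $Z$ is bounded, pick $z_0\in Z$ and $R_0>0$ with $Z\subset B(z_0,R_0)$, for instance $R_0=\diam Z+1$. Apply the definition of local Hausdorff convergence with this base point $z_0$, with the radius $R:=R_0+\varepsilon$, and with $\varepsilon$: there is $N\in\N$ so that for every $n\ge N$,
\[
Y_n\cap B(z_0,R_0)\subset N_\varepsilon\bigl(Y\cap B(z_0,R)\bigr)\quad\text{and}\quad Y\cap B(z_0,R_0)\subset N_\varepsilon\bigl(Y_n\cap B(z_0,R)\bigr).
\]
Since $Y$ and all the $Y_n$ lie in $Z\subset B(z_0,R_0)\subset B(z_0,R)$, intersecting with either ball has no effect, so the two inclusions say precisely $Y_n\subset N_\varepsilon(Y)$ and $Y\subset N_\varepsilon(Y_n)$. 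Hence the Hausdorff distance between $Y_n$ and $Y$ is at most $\varepsilon$ for all $n\ge N$, as desired.

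I do not expect a genuine obstacle: the whole argument rests on choosing the test radius $R$ just above a radius $R_0$ for which $B(z_0,R_0)$ already exhausts $Z$, so the annulus where the local definition tolerates an ``error'' is empty. Alternatively, in the spirit of the remark preceding the statement, one could deduce the result from part (3) of Proposition \ref{equivalencelimitpoints}: if Hausdorff convergence failed, compactness would yield either a point of $Y$ not approximated by the $Y_n$ or a sequence drawn from the $Y_n$ with a limit point outside $Y$, each contradicting (3) --- but the direct argument is shorter and avoids subsequences.
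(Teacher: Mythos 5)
Your proof is correct, but it takes a different route from the paper. The paper obtains the corollary as a consequence of part (3) of Proposition \ref{equivalencelimitpoints}: local Hausdorff convergence is equivalent to the sequential (Kuratowski-type) characterization, and in a compact space that characterization coincides with Hausdorff convergence via a compactness/subsequence argument --- essentially the alternative you sketch in your last sentences. Your main argument instead unwinds the definition directly: choosing $R=R_0+\varepsilon$ with $Z\subset B(z_0,R_0)$ makes all the intersections with balls trivial, so the two local inclusions become $Y_n\subset N_\varepsilon(Y)$ and $Y\subset N_\varepsilon(Y_n)$, i.e.\ $d_H(Y_n,Y)\le\varepsilon$ eventually. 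This is shorter, avoids Proposition \ref{equivalencelimitpoints} entirely, and in fact uses only that $Z$ is bounded, so it proves the equivalence for arbitrary bounded metric spaces, whereas the paper's route invokes properness (for the proposition) and compactness (for the sequential step); what the paper's route buys is that it reuses machinery already established and needed elsewhere. One small remark: your justification of the easy direction (``the conditions are only weakened'') is a bit loose --- the right-hand sides are also intersected with balls, and the actual point is that if $x\in Y_n\cap B(z_0,R-\varepsilon)$ has a witness $y\in Y$ with $d(x,y)<\varepsilon$, then automatically $y\in B(z_0,R)$ --- but since the paper itself asserts this implication without proof, this is not a gap in what the corollary asks you to show.
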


We show that in proper metric spaces local Hausdorff limits exist.

\begin{lemma}\label{existencelimits}
Let $(Z,d)$ be a proper metric space. For $n\in \N$ let $Y_n\subset Z$ be closed sets. Then there exists a subsequence $\{Y_{k_n}\}_{n\in \N}$ that converges to a closed set $Y\subset Z$ in the local Hausdorff sense.
\end{lemma}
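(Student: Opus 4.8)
The plan is a standard compactness-and-diagonalization argument, after which local Hausdorff convergence is obtained from the limit-point characterization of Proposition \ref{equivalencelimitpoints} (valid since $Z$ is proper). Passing to a subsequence I may assume that either every $Y_n$ is empty, in which case $Y=\emptyset$ works trivially, or every $Y_n$ is nonempty; assume the latter. Since $(Z,d)$ is proper it is separable ($\sigma$-compact, if nonempty); fix a countable dense set $\{w_j\}_{j\in\N}$ and let $\mathcal B$ be the countable family of closed balls $\br B(w_j,q)$ with $j\in\N$ and $q$ a positive rational. Each member of $\mathcal B$ is compact, and every $z\in Z$ lies in some $\br B\in\mathcal B$ with $\br B(z,1)\subset\br B$ (take $w_j$ with $d(w_j,z)<1/2$ and $q=2$).

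First I would extract the subsequence. Fixing $\br B\in\mathcal B$, the sets $Y_n\cap\br B$ are closed subsets of the compact set $\br B$, so after passing to a subsequence either $Y_n\cap\br B=\emptyset$ for all $n$, or $Y_n\cap\br B\ne\emptyset$ for all $n$ and, by the Blaschke selection theorem, $Y_n\cap\br B$ converges in the Hausdorff metric to a nonempty compact set. Enumerating $\mathcal B=\{\br B_1,\br B_2,\dots\}$ and iterating this over $j=1,2,\dots$, a diagonal argument yields a single subsequence $\{Y_{k_n}\}_{n\in\N}$ such that for every $\br B\in\mathcal B$ the sequence $Y_{k_n}\cap\br B$ converges in the Hausdorff metric to a compact set $K_{\br B}\subset\br B$ (possibly $K_{\br B}=\emptyset$, meaning $Y_{k_n}\cap\br B=\emptyset$ for large $n$).

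Next I would set
\[
Y=\Bigl\{z\in Z:\ \liminf_{n\to\infty}\dist(z,Y_{k_n})=0\Bigr\}
\]
and verify the two conditions of Proposition \ref{equivalencelimitpoints}(3). The set $Y$ is closed because $z\mapsto\liminf_n\dist(z,Y_{k_n})$ is $1$-Lipschitz; and if $y$ is a limit point of a sequence $y_n\in Y_{k_n}$, then $\dist(y,Y_{k_n})\to 0$ along a subsequence, so $y\in Y$ by definition. The substantive point is to show that each $y\in Y$ is the limit of some sequence $y_n\in Y_{k_n}$. Choose $\br B\in\mathcal B$ with $\br B(y,1)\subset\br B$. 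Since $\dist(y,Y_{k_n})<1$ along a subsequence and every point of $Y_{k_n}$ lying within distance $1$ of $y$ belongs to $\br B$, we get $\dist(y,Y_{k_n}\cap\br B)=\dist(y,Y_{k_n})\to 0$ along that subsequence; as $A\mapsto\dist(y,A)$ is $1$-Lipschitz for the Hausdorff metric and $Y_{k_n}\cap\br B\to K_{\br B}$, this forces $\dist(y,K_{\br B})=0$, i.e.\ $y\in K_{\br B}$. Consequently $\dist(y,Y_{k_n})\le\dist(y,Y_{k_n}\cap\br B)\to\dist(y,K_{\br B})=0$ along the \emph{entire} subsequence $\{k_n\}$, and choosing $y_n\in Y_{k_n}$ with $d(y,y_n)<\dist(y,Y_{k_n})+1/n$ gives $y_n\to y$. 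Proposition \ref{equivalencelimitpoints} then shows that $\{Y_{k_n}\}_{n\in\N}$ converges to the closed set $Y$ in the local Hausdorff sense.

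I expect the only real obstacle to be organizational: the family $\mathcal B$ must be rich enough to control the global distance $\dist(y,Y_{k_n})$ through one fixed ball---this is what the condition $\br B(y,1)\subset\br B$ provides, and it circumvents the boundary effects that make per-ball Hausdorff convergence delicate---and the diagonal extraction must be arranged so that Hausdorff convergence holds simultaneously on all balls of $\mathcal B$. Everything else is a routine use of properness, the Blaschke selection theorem, and the Lipschitz dependence of distance-to-a-set.
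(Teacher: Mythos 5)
Your proof is correct and follows essentially the same route as the paper: Blaschke's selection theorem plus a diagonal argument over a countable family of compact balls, followed by an appeal to Proposition \ref{equivalencelimitpoints} to upgrade per-ball Hausdorff convergence to local Hausdorff convergence. The only differences are cosmetic: the paper diagonalizes over the balls $\br B(z_0,M)$ around a single fixed point, takes $Y$ to be the increasing union of the per-ball limits, and checks criterion (2), whereas you diagonalize over balls centered at a countable dense set, define $Y$ via $\liminf_n \dist(\cdot,Y_{k_n})=0$, and check criterion (3) -- and along the way you treat the empty-intersection cases more carefully than the paper does.
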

\begin{proof}
Let $z_0\in Z$. By Blaschke's theorem \cite[Theorem 7.3.8]{Bur} and a diagonal argument, there exists a subsequence $\{Y_{k_n}\}_{n\in\N}$ such that for each $M\in \N$ there exists a compact set $Y^M\subset \overline{B}(z_0,M)$ with the property that $Y_{k_n}\cap \overline B(z_0,M)$ converges to $Y^M$ in the Hausdorff sense. It follows that $Y^M\subset Y^{M+1}$ for each $M\in \N$ and also $Y^M\cap B(z_0,M)=Y^{N}\cap B(z_0,M)$ for all $N\geq M$. This shows that the set $Y=\bigcup_{M=1}^\infty Y^M$ is a closed subset of $Z$.  

Next, we show that the sequence $Y_{k_n}$ converges to $Y$ in the local Hausdorff sense. Let $R,\varepsilon>0$. Let $M\in \N$, $M>R$. We have 
$$Y_{k_n}\cap B(z_0,R-\varepsilon) \subset Y_{k_n}\cap \overline B(z_0,M) \subset N_\varepsilon(Y^M)\subset N_\varepsilon(Y)$$
for all sufficiently large $n\in \N$. This implies that
\begin{align}\label{inclu1}
Y_{k_n}\cap B(z_0,R-\varepsilon)\subset N_\varepsilon(Y)\cap B(z_0,R-\varepsilon).
\end{align}
Now, we show that
\begin{align}\label{proofofneigh}
N_\varepsilon(Y)\cap B(z_0,R-\varepsilon)\subset N_\varepsilon(Y\cap B(z_0,R)).
\end{align}
Let $y\in N_\varepsilon(Y)\cap B(z_0,R-\varepsilon)$. Then  $\dist (y,Y)<\varepsilon$ and $d(y,z_0)<R-\varepsilon$. There is a point $y_0\in Y$ such that $d(y,y_0)<\varepsilon$ and thus $d(y_0,z_0)<R$. So, we have $y_0\in Y\cap B(z_0,R)$ and $\dist(y,Y\cap B(z_0,R))\le d(y,y_0)<\varepsilon$, which proves \eqref{proofofneigh}. By \eqref{inclu1} and \eqref{proofofneigh} we infer that, for all sufficiently large $n\in \N$,
\[Y_{k_n}\cap B(z_0,R-\varepsilon)\subset  N_\varepsilon(Y\cap B(z_0,R)).\]
In addition, we have
$$Y\cap B(z_0,R-\varepsilon)\subset Y\cap B(z_0,M)= Y^M\cap B(z_0,M)\subset N_\varepsilon(Y_{k_n}\cap \overline B(z_0,M))\subset N_{\varepsilon}(Y_{k_n})$$
for all sufficiently large $n\in \N$. This implies that
$$Y\cap B(z_0,R-\varepsilon) \subset N_\varepsilon(Y_{k_n})\cap B(z_0,R-\varepsilon)\subset N_\varepsilon(Y_{k_n}\cap B(z_0,R)).$$
By Proposition \ref{equivalencelimitpoints} (2) we derive that the sequence $Y_{k_n}$ converges to $Y$ in the local Hausdorff sense. (Note that the subsequence $Y_{k_n}$ depends on the point $z_0$ so we cannot argue that $Y_{k_n}$ converges to $Y$ directly by using the definition of local Hausdorff convergence.)
\end{proof}

\begin{theorem}\label{theorem:equivalence}
Let $(Z,d)$ be a proper metric space. For $n\in \N$ let $Y_n\subset Z$ be closed sets all of which contain a point $z_0\in Z$. Then the following statements are true.

\begin{enumerate}[label=\normalfont(\arabic*)]
\item Let $Y\subset Z$ be a closed set such that $z_0\in Y$. Suppose that $\{Y_n\}_{n\in \N}$ converges to $Y$ in the local Hausdorff sense. Then the sequence $\{(Y_n,d,z_0)\}_{n\in\N}$ converges in the Gromov--Hausdorff sense to $(Y,d,z_0)$.
\item If $\{(Y_n,d,z_0)\}_{n\in\N}$ converges in the Gromov--Hausdorff sense to a complete pointed metric space $(X,\rho,q)$, then there is a subsequence $\{(Y_{k_n},d,z_0)\}_{n\in\N}$ and a closed set $Y\subset Z $  with $z_0\in Y$ such that the sequence $\{Y_{k_n}\}_{n\in \N}$ converges in the local Hausdorff  sense to $Y$ and $(X,\rho,q)$ is isometric to $(Y,d,z_0)$.
\end{enumerate}
\end{theorem}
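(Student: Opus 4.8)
The plan is to prove (1) directly by writing down the Gromov--Hausdorff approximating maps, and then to obtain (2) from (1) together with the compactness statement of Lemma \ref{existencelimits} and the uniqueness of pointed Gromov--Hausdorff limits among complete pointed metric spaces.

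For (1), I would fix $\varepsilon>0$ and $R>0$, set $\delta=\varepsilon/2$, and write $d_n$ for the restriction of $d$ to $Y_n$, so that the ball $B_{d_n}(z_0,R)$ appearing in the definition of Gromov--Hausdorff convergence equals $Y_n\cap B_d(z_0,R)$. Applying the definition of local Hausdorff convergence at the base point $z_0$ (with parameter $\delta$ and the two radii $R$ and $R+\delta$) gives, for all large $n$,
\[
Y_n\cap B(z_0,R)\subset N_\delta(Y)
\qquad\text{and}\qquad
Y\cap B(z_0,R-\varepsilon)\subset N_\delta\bigl(Y_n\cap B(z_0,R)\bigr).
\]
Using the first inclusion I would choose, for each $x\in Y_n\cap B(z_0,R)$, a point $f(x)\in Y$ with $d(x,f(x))<\delta$, and set $f(z_0)=z_0$. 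Then $f$ maps $B_{d_n}(z_0,R)$ into $Y$ with $f(z_0)=z_0$; the triangle inequality gives $|d_n(x,y)-d(f(x),f(y))|<2\delta=\varepsilon$ for all $x,y\in Y_n\cap B(z_0,R)$; and the second inclusion, for each $y\in Y\cap B(z_0,R-\varepsilon)$, produces $x\in Y_n\cap B(z_0,R)$ with $d(y,f(x))<2\delta=\varepsilon$, so that $Y\cap B(z_0,R-\varepsilon)\subset N_\varepsilon(f(Y_n\cap B(z_0,R)))$. These are precisely the three conditions in the definition, so $\{(Y_n,d,z_0)\}_{n\in\N}$ converges to $(Y,d,z_0)$ in the Gromov--Hausdorff sense.

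For (2), I would first apply Lemma \ref{existencelimits} to pass to a subsequence $\{Y_{k_n}\}_{n\in\N}$ that converges in the local Hausdorff sense to some closed set $Y\subset Z$; since $z_0\in Y_{k_n}$ for every $n$, Proposition \ref{equivalencelimitpoints}(3) (applied to the constant sequence $z_0$) forces $z_0\in Y$. By part (1), $\{(Y_{k_n},d,z_0)\}_{n\in\N}$ then converges to $(Y,d,z_0)$ in the Gromov--Hausdorff sense, while by hypothesis this same subsequence converges to $(X,\rho,q)$. Because $Z$ is proper it is complete, hence so is its closed subset $Y$; and $(X,\rho,q)$ is complete by assumption, so the uniqueness of pointed Gromov--Hausdorff limits among complete pointed metric spaces (see \cite[Section 8]{Bur}) identifies $(X,\rho,q)$ with $(Y,d,z_0)$ up to isometry.

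The step that needs care is the bookkeeping of radii in (1): since $f$ is only defined on $Y_n\cap B(z_0,R)$, verifying condition (3) requires producing the approximating point inside $B(z_0,R)$ itself and not merely in a slightly larger ball, which is exactly why local Hausdorff convergence is formulated with the smaller radius $R-\varepsilon$ on the left of the inclusions. Beyond that and Lemma \ref{existencelimits}, the only non-elementary ingredient is the uniqueness of pointed Gromov--Hausdorff limits, and that is where the completeness hypothesis on $(X,\rho,q)$ is used.
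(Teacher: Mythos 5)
Your proposal is correct and follows essentially the same route as the paper: in (1) you build the same $\varepsilon/2$-approximation map $f\colon Y_n\cap B(z_0,R)\to Y$ from the two local Hausdorff inclusions and check the three conditions of the definition, and in (2) you combine Lemma \ref{existencelimits}, part (1), and the uniqueness of pointed Gromov--Hausdorff limits (the paper's citation of \cite[Theorem 8.1.7]{Bur}) in exactly the same way, with the minor extra remark that $z_0\in Y$ follows from Proposition \ref{equivalencelimitpoints}. No substantive differences or gaps.
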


\begin{proof} First, we prove (1). We fix $\varepsilon>0$ and $R>0$. By assumption, there is $N\in \N$ such that for every $n\ge N$, $Y\cap {B}(z_0,R-\varepsilon/2)\subset N_{\varepsilon/2}(Y_n\cap {B}(z_0,R))$ and $Y_n\cap  B(z_0,R)\subset N_{\varepsilon/2}(Y\cap B(z_0, R+\varepsilon/2))\subset N_{\varepsilon/2}(Y)$. Let $n\geq N$. If $x\in Y_n\cap B(z_0,R)$, then there is a point $y\in Y$ such that $d(x,y)<\varepsilon/2$. Let $f\colon Y_n\cap B(z_0,R)\to Y$ be a mapping satisfying $f(z_0)=z_0$ and $d(x,f(x))<\varepsilon/2$ for every $x\in Y_n\cap B(z_0,R)$. For any pair $x,y\in Y_n\cap B(z_0,R)$, we have
\[ |d(x,y)-d(f(x),f(y))|\le d(x,f(x))+d(y,f(y))<\varepsilon.\]
Finally, let $y\in Y\cap B(z_0,R-\varepsilon)$. Since $Y\cap  B(z_0,R-\varepsilon/2)\subset N_{\varepsilon/2} (Y_n\cap B(z_0,R))$, there is a point $x\in Y_n\cap  B(z_0,R)$ such that $d(y,x)<\varepsilon/2$. This in combination with $d(f(x),x)<\varepsilon/2$ gives $d(f(x),y)<\varepsilon$. Therefore, $y\in N_{\varepsilon}(f(Y_n\cap B(z_0,R)))$. This completes the proof of (1).

Now we prove (2). By Lemma \ref{existencelimits}, there exists a subsequence $\{Y_{k_n}\}_{n\in \N}$ that converges to a closed set $Y\subset Z$ in the local Hausdorff sense. Note that $z_0\in Y$. By (1), the sequence $(Y_{k_n},d,z_0)$ converges to $(Y,d,z_0)$ in the Gromov--Hausdorff sense. Finally, \cite[Theorem 8.1.7]{Bur} implies that $(Y,d,z_0)$ is isometric to $(X,\rho,q)$.
\end{proof}

\begin{definition} Let $X,Y$ be two closed sets in $\C$. We say that $Y$ is a \textit{weak tangent} of $X$ if there is a sequence of points $p_n\in X$ and scales $\lambda_n>0$, $n\in \N$, with  $\lim_{n\to \infty}\lambda_n=0$ such that if 
\[X_n=\frac{X-p_n}{\lambda_n}\]
then the sequence $\{X_n\}_{n\in\N}$ converges in the local Hausdorff sense to $Y$. 
\end{definition}

We remark that weak tangents of $X$ are defined in the literature as metric spaces obtained by considering pointed Gromov--Hausdorff limits of the sequence $(X_n,|\cdot|, 0)$. See, for example, \cite[Definition 8.2.2]{Bur} and \cite[Definition 5.2]{Kin}. However, by Theorem \ref{theorem:equivalence}, weak tangents of $X$ are isometric to local Hausdorff limits of $X_n$, which are subsets of $\C$. Hence, when we refer to weak tangents of a planar set, we will always assume that they are subsets of the plane and that they arise as local Hausdorff limits of a sequence $X_n$ as above.

\subsection{D-components}

Let $D$ be a domain in $\C$ and $Y$ be a weak tangent of $\partial D$. Then 
there is a sequence of points $p_n\in \partial D$ and scales $\lambda_n>0$, $n\in\N$,  with  $\lim_{n\to \infty}\lambda_n=0$ such that
\[\left\{\frac{\partial D-p_n}{\lambda_n}\right\}_{n\in\N}\]
converges in the local Hausdorff sense to $Y$. For $n\in \N$ we set 
\[D_n=\frac{D-p_n}{\lambda_n},\quad  \text{and thus}\quad \partial D_n=\frac{\partial D-p_n}{\lambda_n}.\]
Observe that, for all $n\in \N$, $\diam \partial D_n=(\diam \partial D)/{\lambda_n}$. So, if $\diam \partial D>0$, then
\begin{equation}\label{diaminfinity}
\lim_{n\to\infty} \diam \partial D_n=\infty.
\end{equation}

Let $U$ be a connected component of $\C \setminus Y$. We say that $U$ is a \textit{$D$-component} if there exists $z\in U$ such that $z\in D_n$ for infinitely many $n\in \N$.  

Let $\{\Omega_n\}_{n\in \N}$ be a sequence of domains in $\C$ and let $z_0\in {\C}$ be a point with $z_0\in \Omega_n$ for each $n\in \N$. The \textit{$z_0$-kernel} of $\{\Omega_n\}_{n\in \N}$ is the domain $\Omega$ that is the union of all domains $U$ with the property that $z_0\in U$ and  for each compact set $K\subset U$ there exists $N\in \N$ such that $K\subset \Omega_n$ for all $n\geq N$. Note that the $z_0$-kernel could be the empty set. Moreover, if $\Omega\neq\emptyset$, then $z_0\in \Omega$, $\Omega$ is connected, and for each compact set $K\subset \Omega$ there exists $N\in \N$ such that $K\subset \Omega_n$ for all $n\geq N$. We say that the sequence $\{\Omega_n\}_{n\in \N}$ converges to a domain $\Omega\subset {\C}$ in the \textit{Carath\'eodory sense with base at $z_0$} if $\Omega$ is the $z_0$-kernel of every subsequence of $\{\Omega_n\}_{n\in \N}$ \cite[ \S II.5]{Gol}. 

\begin{lemma}\label{d_componentcompact}
Let $D\subset \C$ be a domain and $Y$ be a weak tangent of $\partial D$. Let $U$ be a $D$-component of $\C\setminus Y$ and $z\in U$. Then there exists a subsequence $\{D_{k_n}\}_{n\in \N}$ of $\{D_n\}_{n\in \N}$  that satisfies the following:
\begin{enumerate}[label=\normalfont(\arabic*)]
\item  For every compact set $K\subset U$ we have $K\subset D_{k_n}$ for all sufficiently large $n\in \N$.
\item  $\{D_{k_n}\}_{n\in \N}$ converges to $U$ in the Carath\'eodory sense with base at $z$.
\end{enumerate}
\end{lemma}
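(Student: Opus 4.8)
The plan is to extract the subsequence in two stages: first exhaust the $D$-component $U$ by compact sets, and then upgrade this to Carath\'eodory convergence. Since $U$ is a $D$-component, by definition there is a point $z \in U$ with $z \in D_n$ for infinitely many $n$; let $\{D_{m_n}\}_{n\in\N}$ be the corresponding subsequence, so that $z \in D_{m_n}$ for all $n$. The first step is to show that $U$ is contained in the $z$-kernel of (a further subsequence of) $\{D_{m_n}\}$. For this I would exhaust $U$ by an increasing sequence of compact connected sets $K_1 \subset K_2 \subset \cdots$ with $z \in K_1$ and $\bigcup_j K_j = U$, arranged so that each $K_j$ has positive distance $\delta_j := \dist(K_j, Y) > 0$ from $Y$ (possible since $U$ is open and $Y = \partial U$ locally, or at least $U \cap Y = \emptyset$). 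For a fixed $j$, apply Lemma \ref{distconvergence} (local Hausdorff convergence of $\partial D_n \to Y$): since $\dist(w, Y) \geq \delta_j$ for every $w \in K_j$ and $K_j$ is compact, a straightforward compactness-plus-Lemma \ref{distconvergence} argument gives that $\dist(w, \partial D_n) \geq \delta_j/2$, say, for all $w \in K_j$ once $n$ is large enough (depending on $j$). Hence for large $n$ the compact connected set $K_j$ is disjoint from $\partial D_n$; since it contains $z \in D_{m_n}$, connectedness forces $K_j \subset D_{m_n}$.

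The second step is a diagonal argument: for each $j$ choose $N_j$ so that $K_j \subset D_{m_n}$ for all $n \geq N_j$, with $N_1 < N_2 < \cdots$, and pass to the diagonal subsequence $\{D_{k_n}\}$ defined by $k_n = m_{N_n}$. Then for every compact $K \subset U$ we have $K \subset K_j$ for some $j$ (by the exhaustion), hence $K \subset D_{k_n}$ for all large $n$; this is exactly statement (1). In particular $z$ lies in the $z$-kernel of every subsequence of $\{D_{k_n}\}$ and the $z$-kernel is nonempty; call it $\Omega$. By the previous paragraph $U \subset \Omega$ (every point of $U$ lies in some $K_j$), so it remains only to prove the reverse inclusion $\Omega \subset U$, which together with the fact that $\Omega$ is the $z$-kernel of every subsequence gives statement (2).

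For $\Omega \subset U$: suppose $w \in \Omega$. Since $\Omega$ is a domain containing $w$, pick a small closed ball $\br B(w,r) \subset \Omega$; by definition of the $z$-kernel, $\br B(w,r) \subset D_{k_n}$ for all large $n$, so $\dist(w, \partial D_{k_n}) \geq r$ for large $n$. Applying Lemma \ref{distconvergence} with the constant sequence $z_n \equiv w$ (legitimate since $\partial D_{k_n} \to Y$ in the local Hausdorff sense), we get $\dist(w, Y) = \lim_n \dist(w, \partial D_{k_n}) \geq r > 0$, so $w \notin Y$, i.e.\ $w$ lies in some component of $\C \setminus Y$. That component contains $z$ as well: indeed $z$ and $w$ both lie in $D_{k_n}$ for all large $n$, and $D_{k_n}$ is connected, so if one joins $z$ to $w$ inside $D_{k_n}$ the resulting curve stays at distance $\geq$ some positive amount from $\partial D_{k_n}$ on compact pieces... more cleanly: one shows the $z$-kernel $\Omega$ is connected (a standard property of kernels, noted in the excerpt) and disjoint from $Y$ by the argument just given, hence $\Omega$ is contained in a single component of $\C \setminus Y$; since $z \in U$, that component is $U$. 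Thus $\Omega = U$ and, $\Omega$ being the $z$-kernel of every subsequence of $\{D_{k_n}\}$, the sequence $\{D_{k_n}\}$ converges to $U$ in the Carath\'eodory sense with base at $z$.

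The main obstacle I anticipate is the first step — promoting local Hausdorff convergence $\partial D_n \to Y$ to the uniform estimate $\dist(\,\cdot\,, \partial D_n) \geq \delta_j/2$ on a compact piece $K_j$ of $U$ — because Lemma \ref{distconvergence} is stated pointwise, so one needs a compactness argument (cover $K_j$ by finitely many small balls, or argue by contradiction via a convergent subsequence of would-be bad points and apply Lemma \ref{distconvergence} there) to make the bound uniform over $K_j$. Everything else is bookkeeping: the diagonal extraction, the standard facts about $z$-kernels quoted in the preliminaries, and one more application of Lemma \ref{distconvergence} (with a constant sequence) to see that the kernel avoids $Y$.
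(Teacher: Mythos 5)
Your argument is correct and takes essentially the same route as the paper: pass to a subsequence along which the distinguished point lies in every $D_{k_n}$, use local Hausdorff convergence (via a convergent sequence of would-be bad points) to show that a compact connected set containing that point eventually avoids $\partial D_{k_n}$ and hence lies in $D_{k_n}$, and then identify the $z$-kernel with $U$ by showing the kernel avoids $Y$ via Lemma \ref{distconvergence}. Your exhaustion-plus-diagonal extraction is harmless but redundant (the paper simply enlarges each compact $K$ to a connected compact set containing the distinguished point), and note only that the lemma's $z$ is an arbitrary point of $U$, not necessarily the point witnessing the $D$-component property, which does not affect your argument.
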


\begin{proof}
First, we prove (1). Let $x\in U$ be a point such that $x\in D_n$ for infinitely many $n\in \N$. Then there exists a subsequence $\{D_{k_n}\}_{n\in \N}$ of $\{D_n\}_{n\in \N}$ such that $x\in D_{k_n}$ for all $n\in \N$. Let $K\subset U$ be a compact set. Then there exists a connected compact set $K'\subset U$ such that $K\cup \{x\}\subset K'$. 

We claim that there exists $N\in \N$ such that $K'\cap \partial D_{k_n}=\emptyset$ for all $n\geq N$. Otherwise, $K'\cap \partial D_{k_n}\neq \emptyset$ for infinitely many $n\in \N$. Therefore, there is a sequence of points $y_n\in K'\cap \partial D_{k_n}$, $n\in \N$, that has a subsequence converging to a point $y\in K'$. By Proposition \ref{equivalencelimitpoints}, $y\in K'\cap Y$. This contradicts the fact that $K'\subset U\subset \C\setminus Y$. Therefore, there exists $N\in \N$ such that $K'\cap \partial D_{k_n}=\emptyset$ for all $n\geq N$. It follows that $K'$ lies in a connected component of $\C\setminus \partial D_{k_n}$. Since $x\in K'\cap D_{k_n}$, we conclude that $K\subset K'\subset D_{k_n}$ for all $n\geq N$.

Now, we prove (2). By (1) we infer that the $z$-kernel of $\{D_{k_n}\}_{n\in \N}$, which we denote by $V$, is non-empty and contains $U$. For the reverse inclusion, we claim that $V\cap Y=\emptyset$. Let $w\in V$. Then there exists $r>0$ such that  $\overline{B}(w,r)\subset V$. By the definition of the $z$-kernel, there is $N_1\in \N$ such that $\overline{B}(w,r)\subset D_{k_n}$ for every $n\ge N_1$ and hence $ \dist(w,\partial D_{k_n})>r$ for every $n\ge N_1$. Hence $w$ cannot be a limit point of $\partial D_{k_n}$. By Proposition \ref{equivalencelimitpoints}, $w\notin Y$, so $V\cap Y=\emptyset$. Since $V$ is connected, we conclude that it is contained in a connected component of ${\C}\setminus Y$. Since $U\subset V$, we derive that $U=V$.

The same argument applies to show that the $z$-kernel of every subsequence of $\{D_{k_n}\}_{n\in \N}$ is $U$, so the conclusion regarding Cath\'eodory convergence follows.
\end{proof}

\section{Main theorem}\label{mainthproof}

A \textit{curve} in a metric space $X$ is a continuous mapping $\gamma$ from a compact interval into $X$. The \textit{trace} of a curve $\gamma$ is its image and it is denoted by $|\gamma|$. We define $\ell(\gamma)$ to be the length of $\gamma$ and $\diam (\gamma)$ to be the diameter of $|\gamma|$. In this section, we prove Theorem \ref{maintheorem}. For this purpose we split the proof into two propositions. 

\begin{proposition}\label{propmain1} Let $D\subset \C$ be a simply connected domain and $A\ge 1$. If $D$ is an $A$-John domain, then for every weak tangent $Y$ of $\partial D$, every $D$-component of $\C \setminus Y$ is an $A$-John domain.
\end{proposition}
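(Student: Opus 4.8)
The plan is to transfer the $A$-John property from $D$ to a given $D$-component $U$ of $\C\setminus Y$ by a compactness/limiting argument, using the scaled domains $D_n=(D-p_n)/\lambda_n$ as intermediaries. First I would invoke Lemma \ref{d_componentcompact}: fix a point $z\in U$ witnessing that $U$ is a $D$-component, pass to the subsequence $\{D_{k_n}\}$ guaranteed by that lemma, so that $D_{k_n}\to U$ in the Carathéodory sense with base at $z$ and, crucially, every compact subset of $U$ eventually lies inside $D_{k_n}$. Observe that each $D_{k_n}$ is a rescaled copy of $D$, hence itself an $A$-John domain (the $A$-John condition is scale- and translation-invariant since both sides of the defining inequality are lengths, i.e.\ homogeneous of degree one). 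So the task reduces to: given $A$-John domains $\Omega_n$ converging to a domain $U$ in the Carathéodory sense, show $U$ is $A$-John.

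The core step is then the following limiting argument. Take two distinct points $a,b\in U$. I want to produce an $A$-John curve in $U$ joining them. Choose a connected compact set $K\subset U$ containing $a$ and $b$; by Lemma \ref{d_componentcompact}(1), $K\subset D_{k_n}$ for all large $n$, so $a,b\in D_{k_n}$ and there is an $A$-John curve $\gamma_n\colon[0,1]\to D_{k_n}$ from $a$ to $b$. The defining inequality $\min\{\ell(\gamma_n|_{[0,t]}),\ell(\gamma_n|_{[t,1]})\}\le A\dist(\gamma_n(t),\partial D_{k_n})$ applied at the parameter where the two sub-lengths are equal gives $\ell(\gamma_n)\le 2A\dist(\gamma_n(t_0),\partial D_{k_n})$; but $\dist(\gamma_n(t_0),\partial D_{k_n})$ is bounded above since, e.g., $\dist(a,\partial D_{k_n})\le \dist(a,\partial U)+o(1)$ by Lemma \ref{distconvergence} (recall $\partial D_{k_n}\to Y$ locally Hausdorff and $\partial U\subset Y$, with $a\notin Y$ forcing the relevant distances to stay comparable). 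Hence $\ell(\gamma_n)\le L$ for a constant $L$ independent of $n$. After reparametrizing each $\gamma_n$ by constant speed on $[0,1]$, the family $\{\gamma_n\}$ is uniformly Lipschitz (Lipschitz constant $\le L$) and uniformly bounded, so by Arzelà–Ascoli a subsequence converges uniformly to a curve $\gamma\colon[0,1]\to\C$ with $\gamma(0)=a$, $\gamma(1)=b$, and $\ell(\gamma)\le\liminf\ell(\gamma_n)$ by lower semicontinuity of length. I then need two things: that $|\gamma|\subset U$, and that $\gamma$ satisfies the $A$-John inequality with respect to $\partial U$.

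For the inclusion $|\gamma|\subset U$: if some $\gamma(t)\notin U$, then either $\gamma(t)\in Y$ or $\gamma(t)$ lies in a different component of $\C\setminus Y$. In the first case $\dist(\gamma(t),Y)=0$, but along the sequence $\dist(\gamma_n(t),\partial D_{k_n})\ge \ell(\gamma_n|_{[0,t]})/A$ or $\ell(\gamma_n|_{[t,1]})/A$ — so if $\gamma(t)$ is a genuine interior point of the curve at positive length from both endpoints we get a uniform lower bound on $\dist(\gamma_n(t),\partial D_{k_n})$, contradicting $\dist(\gamma(t),Y)=0$ via Lemma \ref{distconvergence}; the endpoint/degenerate cases are handled directly since $a,b\in U$. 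This shows $|\gamma|$ avoids $Y$, and since $|\gamma|$ is connected and meets $U$, it lies in $U$. For the $A$-John inequality: fix $t\in[0,1]$. By uniform convergence $\gamma_n(t)\to\gamma(t)$, and $\dist(\gamma_n(t),\partial D_{k_n})\to\dist(\gamma(t),Y)$ by Lemma \ref{distconvergence}; moreover $\dist(\gamma(t),Y)=\dist(\gamma(t),\partial U)$ because $\gamma(t)\in U$ and $\partial U\subset Y$ (the nearest point of $Y$ to an interior point of $U$ lies on $\partial U$). On the length side, $\ell(\gamma|_{[0,t]})\le\liminf_n\ell(\gamma_n|_{[0,t]})$ and similarly on $[t,1]$, so
\[
\min\{\ell(\gamma|_{[0,t]}),\ell(\gamma|_{[t,1]})\}\le\liminf_n\min\{\ell(\gamma_n|_{[0,t]}),\ell(\gamma_n|_{[t,1]})\}\le\liminf_n A\dist(\gamma_n(t),\partial D_{k_n})=A\dist(\gamma(t),\partial U).
\]
Thus $\gamma$ is an $A$-John curve in $U$ from $a$ to $b$, and $U$ is an $A$-John domain.

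The main obstacle I anticipate is the interplay of distances: controlling $\dist(\gamma_n(t),\partial D_{k_n})$ from above (to get the uniform length bound $L$ and hence the equicontinuity needed for Arzelà–Ascoli) and from below (to prove $|\gamma|\subset U$), together with the identification $\dist(\cdot,Y)=\dist(\cdot,\partial U)$ on $U$. All of this rests on Lemma \ref{distconvergence} and on the topological fact that for an interior point $w$ of a component $U$ of $\C\setminus Y$ one has $\dist(w,Y)=\dist(w,\partial U)$; I would isolate that topological fact as a short preliminary observation before running the limiting argument.
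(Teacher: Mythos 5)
Your limiting scheme (pass to the subsequence from Lemma \ref{d_componentcompact}, take John curves in $D_{k_n}$, extract a limit curve, and pass the inequality to the limit via Lemma \ref{distconvergence}, lower semicontinuity of length, and the identity $\dist(\cdot,Y)=\dist(\cdot,\partial U)$ on $U$) is sound in its second half, but it has a genuine gap exactly at the step you flag as the ``core'': the uniform length bound $\ell(\gamma_n)\le L$. The John property only gives $\ell(\gamma_n)\le 2A\,\dist(\gamma_n(t_0),\partial D_{k_n})$ at the half-length parameter, and $\dist(\gamma_n(t_0),\partial D_{k_n})$ is \emph{not} controlled by $\dist(a,\partial D_{k_n})$: your attempted justification bounds the wrong quantity, and trying to compare the two via the triangle inequality gives $\ell(\gamma_n)\le 2A\,\dist(a,\partial D_{k_n})+A\,\ell(\gamma_n)$, which is vacuous since $A\ge 1$. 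Worse, the bound is actually false for an \emph{arbitrary} choice of $A$-John curves: since $\lambda_n\to 0$, $\diam D_{k_n}\to\infty$, and in a huge domain a John curve between two fixed nearby points may wander deep into the interior. For instance, in $B(0,R)$ with $R$ large, a curve from $-1$ to $1$ that goes out to radius $R/2$ and back satisfies the $2$-John condition but has length comparable to $R$. So without a mechanism for \emph{selecting} the curves $\gamma_n$, equicontinuity fails and Arzel\`a--Ascoli cannot be applied; this is not a detail but the crux of the proposition.

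The paper closes exactly this gap by making a canonical choice: it takes $\Gamma_n=f_n\circ\gamma_n$, the hyperbolic geodesic from $z$ to $w$ in $D_{k_n}$, where $f_n\colon\D\to D_{k_n}$ are the conformal maps normalized at $z$. Carath\'eodory kernel convergence (available thanks to Lemma \ref{d_componentcompact}(2)) gives $f_n\to f$ and $f_n'\to f'$ locally uniformly on $\D$, so the curves and their sublengths genuinely converge, and N\"akki--V\"ais\"al\"a's Theorem 5.2 guarantees each $\Gamma_n$ is an $A$-John curve; the limit passage is then exactly the computation you wrote at the end. If you want to stay within your bare compactness framework you would need a substitute for this selection, e.g.\ choosing John curves of length comparable to the inner distance between $a$ and $b$ in $D_{k_n}$ (which for hyperbolic geodesics follows from the Gehring--Hayman theorem, and the inner distance is uniformly bounded because a fixed connected compact set $K'\subset U$ containing $a,b$ lies in $D_{k_n}$ for large $n$). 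Either way, an additional ingredient of this type must be supplied; as written, the proposal does not prove the uniform bound it relies on.
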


\begin{proof}
Let $Y$ be a weak tangent of $\partial D$ and let $U$ be a $D$-component of $\C \setminus Y$. We consider a subsequence $\{D_{k_n}\}_{n\in \N}$ of $\{D_n\}_{n\in \N}$ that satisfies the conclusions of Lemma \ref{d_componentcompact}.  So, if $z,w\in U$, there exists $N\in \N$ such that $z,w\in D_{k_n}$ for every $n\ge N$. For each $n\ge N$, there is a conformal mapping $f_n$ from the unit disk $\D$ onto $D_{k_n}$ such that $f_n(0)=z$ and $f'_n(0)>0$. Lemma \ref{d_componentcompact} (2) implies that $\{D_{k_n}\}_{n\in \N}$ converges to $U$ in the Carath\'eodory sense with base at $z$. Thus, applying Carath\'eodory's kernel convergence theorem \cite[Theorem II.5.1, p.\ 55]{Gol}, we infer that $\{f_n\}_{n\ge N}$ converges locally uniformly in $\D$ to a conformal mapping $f$. Moreover, $f(\D)=U$ and $\{f^{-1}_n\}_{n\ge N}$ converges locally uniformly in $U$ to $f^{-1}$. Therefore, we have
\[\lim_{n\to \infty}f_n^{-1}(w)=f^{-1}(w)\in \D.\]
Let $w_n=f^{-1}_n(w)$ and let $\gamma_n(t)=tw_n$, $0\leq t\leq 1$, be a parametrization of the line segment joining $0$ to $w_n$. We deduce that $\{\gamma_n\}_{n\ge N}$ converges uniformly to $\gamma(t)=tf^{-1}(w)$, $0\leq t\leq 1$, which parametrizes the line segment joining $0$ to $f^{-1}(w)$. Also, $\{f_n\circ \gamma_n\}_{n\ge N}$ converges uniformly to $f\circ \gamma$. We set $\Gamma_n=f_n\circ \gamma_n$ and $\Gamma=f\circ \gamma \colon [0,1]\to U$.

By assumption, $D$ is an $A$-John domain and thus, for each $n\in \N$, $D_n$ is an $A$-John domain as well. By \cite[Theorem 5.2]{Nakki}  the curve $\Gamma_n$, which is actually the hyperbolic geodesic connecting $z$ and $w$ in $D_{k_n}$, is an $A$-John curve for every $n\ge N$. Since $\{f_n'\}_{n\ge N}$ converges locally uniformly in $\D$ to $f'$ and by Lemma \ref{distconvergence}, we derive that, for every $t\in[0,1]$,
\begin{align}
\min \{\ell (\Gamma|_{[0,t]}), \ell(\Gamma|_{[t,1]} ) \}&= \min \{\lim_{n\to \infty} \ell (\Gamma_{n}|_{[0,t]}), \lim_{n\to \infty} \ell (\Gamma_{n}|_{[t,1]})\} \notag \\
&= \lim_{n\to \infty} \min \{\ell (\Gamma_{n}|_{[0,t]}),\ell (\Gamma_{n}|_{[t,1]})\} \notag \\
&\le A\lim_{n\to \infty} \dist(\Gamma_{n}(t),\partial D_{k_n}) \nonumber\\
&=A\dist(\Gamma(t),Y) =A\dist(\Gamma(t),\partial U).\nonumber
\end{align}
So, $\Gamma\colon [0,1]\to U$ is an $A$-John curve joining $z$ to $w$. Since $z,w\in U$ were arbitrary, we infer that $U$
is an $A$-John domain.
\end{proof}

Before proving the converse of Proposition \ref{propmain1} we need some auxiliary lemmas. First, we define some notation. If $\gamma$ is a simple curve connecting two points $a$ and $b$, then for any $x,y\in |\gamma|$ we denote by $\gamma (x,y)$ the subcurve of $\gamma$ joining $x$ to $y$.

\begin{lemma}\label{auxlemma}
Let $D\subset \C$ be a domain and let $\gamma\colon [0,1]\to {D}$ be a simple curve. Suppose there exists $M>0$ and $t_0\in(0,1)$ such that 
\begin{align}\label{assdiam}
\min \{\diam (\gamma|_{[0,t_0]}), \diam(\gamma|_{[t_0,1]} ) \} \ge M \dist (\gamma(t_0),\partial D).
\end{align}
Then there exists a subcurve $\tilde \gamma=\gamma(u,v)$ of $\gamma$ and a point $y\in |\tilde\gamma|$ such that for $r=\dist(y,\partial D)$ we have 
$$N_{r/2}(|\tilde\gamma|)\subset D\quad \text{and}\quad  \min \{\diam \tilde\gamma({u,y}),\diam \tilde\gamma({y,v}) \} \ge (M/3) r.$$
\end{lemma}

\begin{proof} We set $y_0=\gamma(t_0)$ and $d_0=\dist (y_0,\partial D)$. If $B(\gamma(t),d_0/2)\subset D$ for every $t\in [t_0,1]$, then we set $\gamma_1=\gamma|_{[t_0,1]}$, $y_1=\gamma(1)$, and we stop the process. Note that $N_{d_0/2}(|\gamma_1|)\subset D$. Otherwise, let 
\[t_1=\inf \{t\in [t_0,1]: B(\gamma(t),d_0/2) \not\subset  D\}\quad \text{and}\quad \gamma_1=\gamma|_{[t_0,t_1]}.\]
Observe that $N_{d_0/2}(|\gamma_1|)\subset D$. Next, let $y_1=\gamma(t_1)$ and $d_1=\dist (y_1,\partial D)$. Note that $d_1=d_0/2$. If $B(\gamma(t),d_1/2)\subset D$ for every $t\in [t_1,1]$, then we set $\gamma_2=\gamma|_{[t_1,1]}$, $y_2=\gamma(1),$ and we stop the process. Note that $N_{d_1/2}(|\gamma_2|)\subset D$. Otherwise, let  
\[t_2=\inf \{t\in [t_1,1]: B(\gamma(t),d_1/2)\not\subset D\} \quad \text{and}\quad \gamma_2=\gamma|_{[t_1,t_2]}.\]
So, we have $N_{d_1/2}(|\gamma_2|)\subset D$. We proceed inductively and we obtain a partition of $\gamma|_{[t_0,1]}$ into a finite number of non-overlapping subcurves $\gamma_k=\gamma(y_{k-1},y_k)$, $k\in \{1,\dots,n\}$, such that $N_{d_{k-1}/2} (|\gamma_k|)\subset D$, where $d_{k-1}=\dist(y_{k-1},\partial D)=d_0/2^{k-1}$. See Figure \ref{auxlemma:fig} for an illustration. Suppose that $\diam \gamma_k< (M/3) d_{k-1}$ for every $k\in\{1,\dots,n\}$. Then 
\begin{align*}
\diam(\gamma|_{[t_0,1]} )&\le \diam \gamma_1 + \diam \gamma_2 +\dots+\diam \gamma_n\le (M/3)(d_0+d_1+\dots+d_{n-1}) \nonumber\\
&=(M/3)(d_0+d_0/2+ d_0/2^2+\dots+d_0/2^{n-1})\le 2Md_0/3. \nonumber
\end{align*}
However, this contradicts \eqref{assdiam}, which implies that $\diam(\gamma|_{[t_0,1]} )\ge Md_0$. Therefore, there is $k_0\in \{1,\dots,n\}$ such that 
\begin{align}\label{auxlemma:1}
\diam \gamma_{k_0}\ge (M/3)d_{k_0-1}.
\end{align}
Observe that
\begin{align}\label{auxlemma:2}
N_{d_{k_0-1}/2}(|\gamma(y_0,y_{k_0})|) = \bigcup_{k=1}^{k_0} N_{d_{k_0-1}/2}(|\gamma_k|)\subset  \bigcup_{k=1}^{k_0} N_{d_{k-1}/2}(|\gamma_k|) \subset D.
\end{align}

\begin{figure}
	\centering
	\begin{tikzpicture}
		\draw[rounded corners=6pt] (-1.4,2)--(2,2)--(2.5,1)--(3,0.7)--(3.5,1)--(3.7,1.5)--(4,0.8)--(4.5,0.4)--(5,0.7)--(5.5,0.5)--(6,0.2)--(6.5,0.5)--(7.4,0.3)--(8,0)--(7.5,-1)--(6,-0.7)--(5,-1.2)--(4,-1)--(3,-1.5)--(2,-2)--(-1.4,-2) node[above]{$D$};
		
		\draw (-1.4,0) node[left] {$\gamma$}--(6.4,0);
	
		\fill (0,0) circle (1.5pt) node[below] {$y_0$};		
		\draw (0,0) circle (1cm) ;
		\draw[dotted] (0.5,0) circle (1cm) ;
		\draw[dotted] (1,0) circle (1cm) ;
		\draw[dotted] (1.5,0) circle (1cm) ;
		\draw[dotted] (2.2,0) circle (1cm) ;
		
		\fill (2.2,0) circle (1.5pt) node[below] {$y_1$};
		\draw (2.2,0) circle (0.5cm);
		\draw[dotted] (2.7,0) circle (0.5cm);
		\draw[dotted] (3.2,0) circle (0.5cm);
		\draw[dotted] (3.7,0) circle (0.5cm);
		\draw[dotted] (4.25,0) circle (0.5cm);
		
		\fill (4.25,0) circle (1.5pt) node[below, yshift=-0.15cm] {$y_2$};
		\draw (4.25,0) circle (0.25cm);
		\draw[dotted] (4.5,0) circle (0.25cm);
		\draw[dotted] (4.8,0) circle (0.25cm);
		\draw[dotted] (5.1,0) circle (0.25cm);
		\draw[dotted] (5.4,0) circle (0.25cm);
		\draw[dotted] (5.7,0) circle (0.25cm);
		\draw[dotted] (5.9,0) circle (0.25cm);
		
		\fill (5.9,0) circle (1.5pt) node[below,yshift=-0.1cm] {$y_3$};
		\draw (5.9,0) circle (0.125cm);
		\draw[dotted] (6.1,0) circle (0.125cm);
		\draw[dotted] (6.25,0) circle (0.125cm);
		\draw[dotted] (6.4,0) circle (0.125cm);
		
		\fill (6.4,0) circle (1.5pt) node[below,shift={(0.6cm,-0.0cm)}] {$y_4=\gamma(1)$};
	\end{tikzpicture}
	\caption{The partition of $\gamma|_{[t_0,1]}$ into curves $\gamma_1,\gamma_2,\gamma_3,\gamma_4$. The balls $B(y_k,d_k/2)$ are shown in bold.}\label{auxlemma:fig}
\end{figure}

Repeating the same process, we partition $\gamma|_{[0,t_0]}$ into a finite number of subcurves $\gamma_{-l}=\gamma(y_{-l},y_{-l+1})$, $l\in \{1,\dots,m\}$, such that $N_{d_{-l+1}/2} (|\gamma_{-l}|)\subset D$, where $d_{-l+1}=\dist(y_{-l+1},\partial D)$. Thus, there is $l_0\in \{1,\dots,m\}$ such that 
\begin{align}\label{auxlemma:3}
\diam \gamma_{-l_0}\ge (M/3)d_{-l_0+1}\quad \text{and} \quad N_{d_{-l_0+1}/2} (|\gamma(y_{-l_0},y_0) |)\subset D.
\end{align} 

Without loss of generality, assume that $d_{-l_0+1}\ge d_{k_0-1}$. We set $\tilde\gamma=\gamma(y_{-l_0},y_{k_0})$ and $y=y_{k_0-1}$. Then by \eqref{auxlemma:2} and \eqref{auxlemma:3} we have $N_{d_{k_0-1}/2}(|\tilde\gamma|)\subset D$. Finally, by \eqref{auxlemma:1} and \eqref{auxlemma:3}  we infer that
\[\diam \tilde\gamma (y, y_{k_0})=\diam \gamma_{k_0}\ge (M/3)d_{k_0-1}= (M/3)\dist(y,\partial D)\]
and
\[\diam \tilde\gamma ( y_{-l_0},y)\ge \diam \gamma_{-l_0}\ge (M/3) d_{-l_0+1}\ge (M/3) d_{k_0-1}=(M/3)\dist(y,\partial D).\]
The proof is complete.
\end{proof}

The following lemma, known as the separation property for simply connected planar domains, follows from \cite[Lemma 3.3]{Kosk2}. See also \cite[Lemma 2.1]{Kosk} for the current formulation. For the definition of the quasi-hyperbolic geodesic and its fundamental properties, we refer to \cite[Section 8]{Bea}.

\begin{lemma}\label{separation}
Let $D\subset \C$ be a simply connected domain. There exists a universal constant $C\ge 1$ not depending on $D$ such that, for any $x,y\in D$, any quasi-hyperbolic geodesic $\Gamma$ joining $x$ and $y$ in $D$, and every $z\in |\Gamma|$, the ball $B=B(z,C\dist(z,\partial D))$ satisfies $B\cap |\gamma| \neq \emptyset$ for any curve $\gamma$ connecting $x$ to $y$ with $|\gamma| \subset D$.
\end{lemma}

For the proof of our main result we need the following special case of Lemma \ref{separation}.

\begin{corollary}\label{sepcorol}
Let $D\subset \C$ be a simply connected domain with $0\in\partial D$ and let $\Gamma$ be a quasi-hyperbolic geodesic joining two distinct points $x,y$ in $D$. Suppose there is a point $z\in |\Gamma|$ such that $\dist(z,\partial D)= |z|=1$. There exists a universal constant $C_1\geq 2$ not depending on $D$ such that the ball $B(0,C_1)$ intersects any curve connecting $x$ and $y$ in $D$.
\end{corollary}

\begin{proof} By Lemma \ref{separation} there exists a constant $C\ge 1$ such that the ball $B=B(z,C)$ satisfies $B\cap |\gamma| \neq \emptyset$ for any curve $\gamma$ connecting $x$ to $y$ with $|\gamma| \subset D$. Therefore, if $C_1=2C$, the ball $B(0,C_1)$, which contains $B(z,C)$, intersects any curve connecting $x$ and $y$ in $D$.
\end{proof}

\begin{proposition}\label{main2}
Let $D\subset \C$ be a bounded simply connected domain. If, for any weak tangent $Y$ of $\partial D$, every $D$-component of $\C \setminus Y$ is a John domain, then $D$ is a John domain.
\end{proposition}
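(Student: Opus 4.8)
The plan is to argue by contrapositive: assuming $D$ is \emph{not} a John domain, I will produce a weak tangent $Y$ of $\partial D$ and a $D$-component of $\C\setminus Y$ that fails to be a John domain. The negation of the John condition gives, for each $n\in\N$, a pair of points $a_n,b_n\in D$ such that every curve $\gamma$ in $D$ joining them violates the John inequality with constant $n$; that is, there is a ``bad'' parameter $t$ with $\min\{\ell(\gamma|_{[0,t]}),\ell(\gamma|_{[t,1]})\}>n\,\dist(\gamma(t),\partial D)$. It is convenient to work with the hyperbolic geodesic $\gamma_n$ in $D$ joining $a_n$ to $b_n$ (using \cite[Theorem 5.2]{Nakki}, an $A$-John domain is equivalent to all hyperbolic geodesics being $A$-John curves, so the failure is witnessed along geodesics). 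Along $\gamma_n$ there is a point $z_n=\gamma_n(t_n)$ and a subarc from, say, $a_n$ to $z_n$ whose length exceeds $n\,\dist(z_n,\partial D)$. Pick a boundary point $p_n\in\partial D$ with $|p_n-z_n|=\dist(z_n,\partial D)$, and set $\lambda_n=\dist(z_n,\partial D)$. Rescale: $D_n=(D-p_n)/\lambda_n$ and $\partial D_n=(\partial D-p_n)/\lambda_n$. Passing to a subsequence, Lemma \ref{existencelimits} gives a local Hausdorff limit $Y$ of $\partial D_n$, which is a weak tangent of $\partial D$.

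After rescaling, the image $\zeta_n$ of $z_n$ satisfies $|\zeta_n|=\dist(\zeta_n,\partial D_n)=1$, so (passing to a further subsequence) $\zeta_n\to\zeta$ with $|\zeta|=1$; by Lemma \ref{distconvergence} we get $\dist(\zeta,Y)=1$, in particular $\zeta\notin Y$. Let $U$ be the connected component of $\C\setminus Y$ containing $\zeta$. The key point is that $U$ is a $D$-component: because $\zeta_n\in D_n$ and $\zeta_n\to\zeta$, for any $r<1$ the ball $\overline B(\zeta,r)$ eventually lies in $D_n$ (again via distance convergence), so $\zeta\in D_n$ for infinitely many $n$. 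Thus $U$ is a $D$-component, and by hypothesis $U$ is a $B$-John domain for some $B\ge1$. I will now derive a contradiction by transporting a good John curve in $U$ back down to $D$. The rescaled geodesic arc $\Gamma_n$ (image of $\gamma_n|_{[0,t_n]}$, reparametrized) runs from the image $\alpha_n$ of $a_n$ to $\zeta_n$, has length $>n$, and stays in $D_n$; it is the hyperbolic geodesic in $D_n$ between its endpoints, so it is the natural candidate to converge. Using Lemma \ref{d_componentcompact}, along a subsequence $D_{k_n}\to U$ in the Carath\'eodory sense with base at $\zeta$, so conformal maps $f_n\colon\D\to D_{k_n}$ (normalized at $\zeta$) converge locally uniformly to $f\colon\D\to U$, and hyperbolic geodesics converge accordingly.

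The main obstacle is controlling the ``other endpoint'' $\alpha_n$ (the rescaled image of $a_n$): a priori $\alpha_n$ could escape to infinity or approach $Y$, in which case the geodesic arcs do not converge to a nice curve in $U$. I expect the fix to be a \emph{truncation} argument. Since $\ell(\Gamma_n)>n\to\infty$, I can replace $\alpha_n$ by the point $\alpha_n'$ on $\Gamma_n$ at arclength exactly, say, $L$ from $\zeta_n$ (for a fixed large constant $L$ to be chosen after $B$); the truncated arc still has length $L$ and still ends at $\zeta_n$, and its hyperbolic-geodesic nature in $D_{k_n}$ is inherited on subarcs. Then $\alpha_n'$ stays in a compact neighborhood of $\zeta$ (it's within distance $L$ of $\zeta_n\to\zeta$), so along a subsequence $\alpha_n'\to\alpha'\in\overline U$, and by lower semicontinuity of length under uniform convergence the limiting curve $\Gamma$ in $U$ from $\alpha'$ to $\zeta$ has length $\ge$ something positive; more importantly, I will instead directly estimate. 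Here is the cleaner route: in $U$, connect $\alpha'$ (or a nearby interior point) to $\zeta$ by a $B$-John curve $\sigma$; pulling $\sigma$ back via $f\circ f_n^{-1}$ gives curves $\sigma_n$ in $D_{k_n}$ from near $\alpha_n'$ to near $\zeta_n$ with lengths and boundary distances converging (by local uniform convergence of $f_n',f_n^{-1}$ and Lemma \ref{distconvergence}) to those of $\sigma$. Concatenating $\sigma_n$ with a short connector and with the (bounded-length) tail of the original geodesic beyond $\alpha_n'$, and then unscaling back to $D$, produces a curve in $D$ from $a_n$ to $b_n$ through $z_n$ whose two subarc lengths at the parameter corresponding to $z_n$ are both $\le C(B,L)\cdot\lambda_n = C(B,L)\,\dist(z_n,\partial D)$ — contradicting the choice of $a_n,b_n$ once $n>C(B,L)$. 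The delicate bookkeeping is: (i) ensuring the concatenated curve genuinely lies in $D$ and is a legitimate competitor, (ii) handling the half of $\gamma_n$ on the $b_n$ side (which is unconstrained and can be kept as is, since we only need a contradiction at the single bad parameter $t_n$, where the $a_n$-side length is what blew up — so it suffices to shrink \emph{that} side below $n\lambda_n$), and (iii) choosing the truncation length $L$ large enough that the short connector between the two almost-matching endpoints is negligible. I would organize the write-up around first reducing to geodesics, then the rescaling and extraction of $Y$ and $U$, then the truncation, then the Carath\'eodory/conformal convergence transport, and finally the length estimate yielding the contradiction.
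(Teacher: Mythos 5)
Your setup --- arguing by contrapositive, extracting bad pairs $a_n,b_n$, locating a bad point $z_n$ on the hyperbolic geodesic, rescaling by $\lambda_n=\dist(z_n,\partial D)$ at a nearest boundary point, passing to a weak tangent $Y$ and identifying the $D$-component $U$ containing $\zeta=\lim\zeta_n$ --- coincides with the paper's proof up to that stage. The genuine gap is in the final contradiction. The failure of the John property at $a_n,b_n$ says that \emph{every} curve joining them has \emph{some} bad parameter, and that parameter depends on the curve. Your plan is to build a new curve from $a_n$ to $b_n$ and verify the John inequality only at the single parameter corresponding to $z_n$; that contradicts nothing, because the new curve may have its bad parameter elsewhere. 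To contradict the choice of $a_n,b_n$ you would have to exhibit a curve satisfying the John inequality with a constant independent of $n$ at \emph{every} parameter, i.e., essentially prove directly that $D$ is John --- and the hypothesis only provides John constants for $D$-components of various weak tangents with no uniformity, so there is no fixed $C(B,L)$ available across $n$. Moreover, even the claimed one-parameter bound is not achieved by the construction as described: at the parameter of $z_n$, the $b_n$-side of your concatenated curve is the untouched half of $\gamma_n$, of length greater than $n\lambda_n$, while the $a_n$-side still contains the untruncated tail from $a_n$ to $\alpha_n'$, of rescaled length at least $n-L$; hence the minimum of the two subarc lengths at $z_n$ remains of order $n\lambda_n$, not $C(B,L)\lambda_n$. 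The truncation avoids the escape of $\alpha_n$ only by severing the connection to $a_n$, and reattaching the severed tail restores exactly the length you needed to remove.

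The paper's proof goes in the opposite direction: it shows that the limiting $D$-component $U$ is \emph{not} a John domain, contradicting the hypothesis directly. The two ingredients your proposal lacks are: (i) a careful choice of the bad point (in the middle portion of the geodesic, minimizing boundary distance there), which yields a subcurve $\Gamma_n$ with $N_{1/2}(|\Gamma_n|)\subset D_n$ and with both arms of diameter at least $n-1$, so that Hausdorff limits of the arms produce continua $\Gamma_m^{\pm}\subset U$ through $y'$ reaching $\partial B(0,m)$ for every $m$; and (ii) the separation property of quasi-hyperbolic geodesics (Lemma \ref{separation}), which passes to the limit and forces every curve in $U$ joining a far point of $\Gamma_m^-$ to a far point of $\Gamma_m^+$ to enter a fixed ball $B(0,C_1)$, where $\dist(\cdot,\partial U)\le 2C_1$. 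A putative $A$-John curve between such points with $m>(2A+1)C_1$ then violates the John (diameter) inequality at the crossing point. Your proposal contains no substitute for this separation step, which is the heart of the implication; without it, knowing that $U$ is John gives no leverage on curves in $D$ at scale $\lambda_n$.
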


\begin{proof}
Suppose that $D$ is not a John domain. Then, by \cite[Theorem 2.14]{Nakki}, for every $n\in \N$ there are two distinct points $z_n,w_n\in D$ such that for every curve $\gamma_n\colon [0,1]\to  {D}$ with $\gamma_n(0)=z_n$ and $\gamma_n(1)=w_n$ we have
\begin{align}
\min \{\diam (\gamma_n|_{[0,t]}), \diam(\gamma_n|_{[t,1]} ) \} \ge n\dist (\gamma_n(t),\partial D) \nonumber
\end{align}
for some $t\in(0,1)$. Let $\gamma_n$ be a quasi-hyperbolic geodesic joining $z_n$ to $w_n$ in $D$.

We fix $n\in \N$. By Lemma \ref{auxlemma} there exists a subcurve $\tilde \gamma_n=\gamma_n(u_n,v_n)$ of $\gamma_n$ and a point $y_n\in |\tilde \gamma_n|$ such that for $\lambda_n=\dist(y_n,\partial D)$ we have
\begin{align}\label{diamlemprin}
\min \{\diam \tilde\gamma_n({u_n,y_n}),\diam \tilde\gamma_n({y_n,v_n}) \} \ge (n/3) \lambda_n  \quad \text{and}\quad  N_{\lambda_n/2}(|\tilde\gamma_n|)\subset D.
\end{align}
If $x_n$ is a point on $\partial D$ for which $\dist(y_n,\partial D)=|y_n-x_n|$, then we consider the function
\[f_n(\zeta)=\frac{\zeta-x_n}{\lambda_n},\,\,\zeta\in \C.\]
Let $D_n=f_n(D)$ and thus $\partial D_n=f_n(\partial D)$.
If we set $y'_n=f_n(y_n)$, then 
\begin{align}\label{dist=1}
\dist (y'_n,\partial D_n)=\frac{\dist(y_n,\partial D)}{\lambda_n}=1=|y'_n|.
\end{align}
Moreover, if $\Gamma_n=f_n \circ \tilde\gamma_n$ and $Z_n=f_n(u_n), W_n=f_n(v_n)$, by \eqref{diamlemprin} we have 
\begin{align}\label{diamgamman}
\min\{\diam \Gamma_n({Z_n,y'_n}), \diam \Gamma_n({y'_n,W_n})\}\ge n/3
\end{align}
and
\begin{align}\label{neighborhood}
N_{1/2}(|\Gamma_n|)\subset D_n.
\end{align}

By Lemma \ref{existencelimits} there is a closed set $Y\subset \C$ such that a subsequence of $\{\partial D_n\}_{n\in \N}$ converges in the local Hausdorff sense to $Y$. For the sake of simplicity, we denote the subsequence by $\{\partial D_n\}_{n\in \N}$. By \eqref{diamlemprin} we derive that, for all $n\in \N$,
\[\lambda_n\le \frac{3\diam D}{n}.\]
Since $D$ is a bounded simply connected domain, we infer that $\lim_{n\to\infty} \lambda_n=0$. So, $Y$ is a weak tangent of $\partial D$. Since $0\in \partial D_n$ for all $n\in\N$, by Proposition \ref{equivalencelimitpoints} we infer that $0\in Y$. By passing to a further subsequence, thanks to \eqref{dist=1}, we may assume that $y_{n}'$ converges to a point $y'\in \partial \D$. By Lemma \ref{distconvergence} and  \eqref{dist=1} we have
\[\dist(y',Y)=\lim_{n\to\infty}\dist(y_n',\partial D_n)=1.\]
Thus, $y'$ lies in a connected component $U$ of $\C\setminus Y$. Note that for all sufficiently large $n\in \N$ we have $y'\in B(y_n',1)\subset D_{n}$. This proves that $U$ is a $D$-component. By passing to a further subsequence, and using Lemma \ref{d_componentcompact} (1), we assume that for each compact set $K\subset U$, we have $K\subset D_{n}$ for all sufficiently large $n\in \N$. 

Let $m\in \N$, $m\geq 2$, to be specified later. For every $n>6m$, by \eqref{dist=1} we have $y_n' \in  B(0,m)$ and by \eqref{diamgamman} we deduce that $\Gamma_n({Z_n,y'_n}),\Gamma_n({y'_n,W_n})$ intersect $\partial B(0,m)$ on at least one point, respectively. We denote by $\Gamma^{-}_{n,m}$ a subcurve of $\Gamma_n({Z_n,y'_n})$ that lies in $\br B(0,m)$ and joins $y'_n$ to $\partial B(0,m)$ and we denote by $\Gamma^{+}_{n,m}$ a subcurve of $\Gamma_n({y'_n,W_n})$ that lies in $\br B(0,m)$ and joins $y'_n$ to $\partial B(0,m)$. Therefore, $|\Gamma^{-}_{n,m}|$ and $|\Gamma^{+}_{n,m}|$ are compact and connected and they are contained in $\br B(0,m)\cap D_n$. By Blaschke's theorem \cite[Theorem 7.3.8]{Bur}, there is a subsequence of $|\Gamma_{n,m}^{\pm}|$, ${n>6m}$, that converges in the Hausdorff sense to a compact and connected set $\Gamma_m^{\pm}$. We denote the subsequence by $|\Gamma_{k_n,m}^{\pm}|$, $n\geq 1$, and observe that it depends on $m$. The sets $\Gamma^{+}_m,\Gamma^{-}_m$ are compact and connected, they contain $y'$, they intersect $\partial B(0,m)$ and they are contained in $\br B(0,m)$.

Now, we prove that $\Gamma^-_{m}\cup \Gamma^+_{m}\subset U$. Let $x\in \Gamma^+_{m}$. Then there is a sequence $x_n\in |\Gamma^{+}_{{k_n},m}|$, $n\in \N$, that converges to $x$. By Lemma \ref{distconvergence} we have
\[\lim_{n\to \infty}\dist(x_n,\partial D_{k_n})=\dist (x,Y)\]
and by \eqref{neighborhood} we derive that $\dist(x_n,\partial D_{k_n})\ge 1/2$. Hence, $\dist (x,Y)\ge 1/2$ for every $x\in \Gamma^+_{m}$. The same argument applies to $\Gamma^-_{m}$. Since $\Gamma^-_{m}\cup \Gamma^+_{m}$ is compact, connected, and contains $y'$, we deduce that $\Gamma^-_{m}\cup \Gamma^+_{m}$ lies in  $U$.

Our goal is to show that $U$ is not a John domain. For this purpose we need to prove the following claim. 

\medskip

\noindent
{\bf Claim:} There is a uniform constant $C_1\ge 2$ that does not depend on $m$ such that, if $m>C_1$, then for any $p\in \Gamma^-_{m}\setminus\br B(0,C_1)$ and $q\in \Gamma^+_{m}\setminus\br B(0,C_1)$, any curve connecting $p$ to $q$ in $U$ intersects $B(0,C_1)$. An immediate consequence is that $\partial U \cap \br B(0,C_1)\neq \emptyset$.

To prove this claim, recall that $\Gamma_n$ is a quasi-hyperbolic geodesic between $Z_n$ and $W_n$ in $D_n$ and $y'_n\in |\Gamma_n|$. Corollary \ref{sepcorol} and \eqref{dist=1} imply that there is a uniform constant $C_1 \ge 2$ such that, for every $a_n,b_n\in |\Gamma_n|$ with $y'_n\in|\Gamma_n(a_n,b_n)|$, the ball $B(0,C_1)$ intersects any curve connecting $a_n$ and $b_n$ in $D_n$. 

Now, take $m>C_1$. Suppose there are $p\in \Gamma^-_{m}\setminus \br B(0,C_1)$ and $q\in \Gamma^+_{m}\setminus \br B(0,C_1)$ such that there is a curve $\beta$ joining $p$ to $q$ and lying in $U\setminus  B(0,C_1)$. Then there exists $\varepsilon>0$ such that $B(p,\varepsilon),B(q,\varepsilon)\subset U \setminus\br B(0,C_1)$. Moreover, there are sequences $\{p_n\}$ and $\{q_n\}$ with $p_n\in |\Gamma^{-}_{k_n,m}|$ and $q_n\in |\Gamma^{+}_{k_n,m}|$ such that $p_n\to p$ and $q_n\to q$ as $n\to \infty$. Therefore, there is $N_1\in \N$ such that $p_n\in B(p,\varepsilon)$ and $q_n\in B(q,\varepsilon)$ for every $n\ge N_1$. For each $n\ge N_1$, we take the curve $\beta_n$ which is the concatenation of the line segment joining $p_n$ to $p$, the curve $\beta$ joining $p$ to $q$, and the line segment joining $q_n$ to $q$. By construction, $|\beta_n|\subset U\setminus  B(0,C_1)$ for all $n\ge N_1$. Since $U$ is a $D$-component, there exists $N_2\geq N_1$ such that $|\beta_n|\subset D_{k_n}$ for every $n\ge N_2$. So, $\beta_n$ connects $p_n$ to $q_n$ and $|\beta_n|\subset D_{k_n}\setminus  B(0,C_1)$ for every $n\ge N_2$. This leads to contradiction and thus our claim is true.

Now, we are able to prove that $U$ is not a John domain. On the contrary, assume that $U$ is an $A$-John domain for some $A\ge 1$. Let $m>(2A+1)C_1$, $m\in \N$. Since $m>C_1$, we can find $p\in(\Gamma^-_{m}\cap \partial B(0,m))\setminus \br B(0,C_1)$ and $q\in (\Gamma^+_{m}\cap \partial B(0,m))\setminus \br B(0,C_1)$. By \cite[Theorem 2.14]{Nakki} there is a curve $\gamma\colon [0,1]\to U$ such that $\gamma(0)=p$, $\gamma(1)=q$ and 
\begin{equation}\label{johndiamcond}
\min\{\diam (\gamma|_{[0,t]}), \diam(\gamma|_{[t,1]} )\} \leq A \dist(\gamma(t),\partial U),
\end{equation}
for all $t\in [0,1]$. By our claim we infer that $|\gamma|\cap B(0,C_1)\neq \emptyset$ and $\partial U \cap \br B(0,C_1)\neq \emptyset$. Thus, there is $z\in |\gamma|\cap B(0,C_1)$ such that $\dist (z,\partial U)\le 2C_1$. Without loss of generality, assume that $\diam \gamma(p,z)\le \diam \gamma(z,q)$. Then, by \eqref{johndiamcond} we have
\[\diam \gamma(p,z)\le A \dist(z,\partial U)\le 2AC_1.\]
However, since $m>(2A+1)C_1$, we have $\diam \gamma(p,z)\ge m-C_1>2AC_1$, which leads to contradiction. Hence, $U$ is not a John domain.

Concluding, we have found a weak tangent $Y$ of $\partial D$ for which there is a $D$-component $U$ of $\C\setminus Y$ that is not a John domain. So, the proof is complete.
\end{proof}

Now, Theorem \ref{maintheorem} follows directly from Proposition \ref{propmain1} and Proposition \ref{main2}.

\section{Further properties of weak tangents}\label{propertieswt}

In this section we prove Theorem \ref{sum}, which gives several geometric properties of weak tangents of simply connected John domains that are interesting in their own right. Each part of the theorem is proved in a separate lemma.

\begin{lemma}\label{dcompounbounded}
Let $D\subset \C$ be a simply connected John domain. For every weak tangent $Y$ of $\partial D$, every $D$-component of $\C\setminus Y$ is unbounded and simply connected.
\end{lemma}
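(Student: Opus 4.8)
The plan is to establish the two properties separately, both starting from Lemma~\ref{d_componentcompact}. Let $Y$ be a weak tangent of $\partial D$, given by points $p_n\in\partial D$ and scales $\lambda_n\to0$ with $\partial D_n\to Y$ in the local Hausdorff sense; fix a $D$-component $U$ of $\C\setminus Y$ and a point $z\in U$, and let $\{D_{k_n}\}_{n\in\N}$ be the subsequence furnished by Lemma~\ref{d_componentcompact}, so that every compact subset of $U$ is eventually contained in $D_{k_n}$ and $\{D_{k_n}\}_{n\in\N}$ converges to $U$ in the Carath\'eodory sense with base at $z$. Each $D_{k_n}$ is a simply connected domain (an affine image of $D$) with $D_{k_n}\ne\C$ because $\partial D\ne\emptyset$, while $U$, being nonempty and open, is not a single point. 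Applying Carath\'eodory's kernel convergence theorem \cite[Theorem II.5.1, p.\ 55]{Gol} to the conformal maps $f_n\colon\D\to D_{k_n}$ with $f_n(0)=z$ and $f'_n(0)>0$ (cf.\ the proof of Proposition~\ref{propmain1}), we obtain a conformal map $f\colon\D\to U$, and hence $U$ is simply connected.

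For unboundedness I would first record that $0\in\partial D_n$ for every $n$, since $p_n\in\partial D$, so $0\in Y$ by Proposition~\ref{equivalencelimitpoints}; and that $\diam D_{k_n}=\diam D/\lambda_{k_n}\to\infty$, since $\diam D>0$ and $\lambda_{k_n}\to0$ (when $D$ is unbounded this diameter is infinite throughout). Let $A\ge1$ be a John constant of $D$, so that every $D_{k_n}$ is an $A$-John domain. Pick $a_n,b_n\in D_{k_n}$ with $|a_n-b_n|\ge n$ and let $\xi_n$ be the arclength midpoint of the hyperbolic geodesic of $D_{k_n}$ joining $a_n$ to $b_n$, which is an $A$-John curve by \cite[Theorem 5.2]{Nakki}. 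Then the John condition at $\xi_n$ gives $\dist(\xi_n,\partial D_{k_n})\ge|a_n-b_n|/(2A)\ge n/(2A)$, so $\dist(\xi_n,\partial D_{k_n})\to\infty$.

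Next, for each large $n$ (so that $z\in D_{k_n}$) let $\gamma_n\colon[0,L_n]\to D_{k_n}$ be the arclength parametrization of the hyperbolic geodesic of $D_{k_n}$ from $z$ to $\xi_n$, again an $A$-John curve. By Lemma~\ref{distconvergence} we have $\dist(z,\partial D_{k_n})\to\dist(z,Y)=\dist(z,\partial U)<\infty$, so $L_n\ge|z-\xi_n|\ge\dist(\xi_n,\partial D_{k_n})-\dist(z,\partial D_{k_n})\to\infty$; and for $s\in[0,L_n/2]$ the John condition gives $\dist(\gamma_n(s),\partial D_{k_n})\ge\ell(\gamma_n|_{[0,s]})/A=s/A$. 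The curves $\gamma_n|_{[0,L]}$ are $1$-Lipschitz, so an Arzel\`a--Ascoli argument with a diagonal extraction over $L\in\N$ yields a subsequence along which $\gamma_n\to\gamma$ uniformly on compact subsets of $[0,\infty)$, where $\gamma\colon[0,\infty)\to\C$ and $\gamma(0)=z$. By Lemma~\ref{distconvergence} again, $\dist(\gamma(s),Y)=\lim_n\dist(\gamma_n(s),\partial D_{k_n})\ge s/A$ for every $s\ge0$, so $\gamma([0,\infty))$ is a connected set disjoint from $Y$ containing $z\in U$, hence contained in $U$. Since $0\in Y$, we get $|\gamma(s)|\ge\dist(\gamma(s),Y)\ge s/A\to\infty$, so $U$ contains an arc tending to $\infty$ and is therefore unbounded.

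The main obstacle is unboundedness. The crux is that the rescaled domains $D_{k_n}$ are large enough to contain points arbitrarily far from their boundary, so that the initial portion of a John curve emanating from the fixed base point $z$ is forced by the John inequality to remain uniformly far from $\partial D_{k_n}$; passing to the limit, this produces an arc of $U$ escaping to infinity, and the uniform bound $\dist(\gamma_n(s),\partial D_{k_n})\ge s/A$ is precisely what keeps the limiting arc from meeting $Y$ and so from leaving $U$. One should also be careful that the rescalings genuinely blow up (which uses $\diam D>0$) and that in an $A$-John domain the distance from the midpoint of a long John curve to the boundary is comparably large.
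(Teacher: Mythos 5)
Your proof is correct, and both halves take routes that differ from the paper's. For simple connectedness the paper argues purely topologically: if $U$ were not simply connected, there would be a Jordan region $V$ with $\partial V\subset U$ enclosing a connected component $C_0$ of $\partial U$; by Lemma \ref{d_componentcompact} and the simple connectedness of $D_{k_n}$ one gets $\overline{V}\subset D_{k_n}$ for all large $n$, whence $\dist(C_0,Y)>0$ by Lemma \ref{distconvergence}, contradicting $C_0\subset Y$. You instead reuse the Carath\'eodory kernel/Riemann map machinery of Proposition \ref{propmain1}: since $D_{k_n}\to U$ in the Carath\'eodory sense with base $z$ and $U\neq\C$ (because $0\in Y$, which you record), the normalized maps $f_n$ converge to a conformal map of $\D$ onto $U$, so $U$ is simply connected. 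This is legitimate and is exactly how the paper itself invokes Goluzin's theorem in Proposition \ref{propmain1}; note only that it quietly relies on the standard fact that a nondegenerate kernel of simply connected domains is simply connected (which is the content of the paper's topological argument and is built into the cited theorem's conclusion that the limit map is onto the kernel). Your route needs nothing beyond what Proposition \ref{propmain1} already sets up, and notably neither argument uses the John property here; the paper's is more elementary and self-contained. For unboundedness, the paper runs $A$-John curves from $z$ to points $w_n\in D_{k_n}\setminus \overline{B}(z,2R)$, truncates them at $\partial B(z,R)$, takes Blaschke limits, and shows the limit continuum satisfies $\dist(x,Y)\geq |z-x|/A>0$, giving $\diam U\geq R$ for every $R$; you instead manufacture deep points $\xi_n$ with $\dist(\xi_n,\partial D_{k_n})\geq n/(2A)$ (midpoints of long John curves), run arclength-parametrized John curves from $z$ to $\xi_n$, use $\dist(\gamma_n(s),\partial D_{k_n})\geq s/A$ on the initial half, and pass to an Arzel\`a--Ascoli limit ray in $U$ with $|\gamma(s)|\geq s/A$. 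The mechanism is the same (the John inequality keeps limits of curves emanating from $z$ away from $Y$), the compactness tool differs, and your version even yields the slightly stronger statement that $U$ contains a curve tending to infinity. Minor housekeeping: work along the tail of indices for which $z\in D_{k_n}$ and $\diam D_{k_n}\geq n$, and justify that the John curves are rectifiable so that the arclength midpoint and parametrization make sense (immediate when $D$ is bounded, and in general because near any parameter the shorter half has length at most $A\dist(\cdot,\partial D_{k_n})$, which is finite and locally bounded by continuity).
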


The idea of the lemma is that if there exists a bounded $D$-component, then the domain $D$ must have narrow passages, which is prohibited by the John domain condition. The last claim of the lemma about simple connectivity is more general and does not rely on the John property.

\begin{proof}
Suppose that $D$ is an $A$-John domain for some $A\geq 1$. Let $Y$ be a weak tangent of $\partial D$ and let $U$ be a $D$-component of $\C\setminus Y$. By Lemma \ref{d_componentcompact} there exists a subsequence $\{D_{k_n}\}_{n\in \N}$ of $\{D_n\}_{n\in \N}$ such that for every compact set $K\subset U$ we have $K\subset D_{k_n}$ for all sufficiently large $n\in \N$. Let $z\in U$. There is $N_1\in\N$ such that $z\in D_{k_n}$ for all $n\ge N_1$. Let $R>0$. By \eqref{diaminfinity}, $\diam \partial D_n\to \infty$ as $n\to \infty$ and hence there is $N_2\ge N_1$ such that $D_{k_n}\not\subset \br B(z,2R)$ for every $n\ge N_2$. So, for every $n\ge N_2$, there is $w_n\in D_{k_n} \setminus  \br B(z,2R)$. Since $D_{k_n}$ is an $A$-John domain, we derive that, for every $n\ge N_2$, there is an $A$-John curve $\Gamma_n\colon [0,1]\to D_{k_n}$ with $\Gamma_n(0)=z$ and $\Gamma_n(1)=w_n$. For each $n\ge N_2$, let $E_n=\Gamma_n([0,t_n])$ where $t_n=\min\{t:\Gamma_n(t)\in\partial B(z,R)\}$. So, $E_n$ is a compact and connected set that is contained in $\br B(z,R)$ and it contains $z$ and a point on $\partial B(z,R)$. By Blaschke's theorem \cite[Theorem 7.3.8]{Bur}, there is a subsequence $\{E_{m_n}\}_{n\ge 1}$ of $\{E_n\}_{n\ge N_2}$ that converges in the Hausdorff sense to a compact and connected set $E\subset \br B(z,R)$ such that $z\in E$ and $E\cap \partial B(z,R)\neq \emptyset$. We claim that $E\subset U$. Since $E$ is connected and $z\in E\cap U$, it suffices to prove that $\dist(E,Y)>0$.

To show this, we choose $x\in E$ such that $\dist(E,Y)=\dist(x,Y)$. If $x=z$ this is trivial, so we suppose that $x\neq z$. There is a sequence $x_n\in E_{m_n}$, $n\in \N$, that converges to $x$. So, there is $t_n'\in [0,1]$ such that $x_n= \Gamma_{m_n}(t_n')$ for all $n\in \N$. By Lemma \ref{distconvergence} we have
\begin{align}
\dist(x,Y)&=\lim_{n\to\infty}\dist(x_n,\partial D_{k_n})\ge \frac{1}{A}\lim_{n\to\infty}\min\{\ell( \Gamma_{m_n}|_{[0,t_n']}),\ell (\Gamma_{m_n}|_{[t_n',1]})\}\nonumber\\
&\ge \frac{1}{A}\lim_{n\to\infty}\min\{|z-x_n|,|x_n-w_n|\}=\frac{1}{A}|z-x|>0,\nonumber
\end{align}
where we used the fact that $|z-x_n|\le R\le |x_n-w_n|$ for all $n\in \N$. So, the claim has been proved. Since $E\subset U$, we have $\diam U\ge \diam E \ge R$. Letting $R\to \infty$ we get $\diam U =\infty$ and hence $U$ is unbounded.

Finally, we prove that $U$ is simply connected. Suppose that $U$ is not simply connected. Then there is a Jordan region $V$ such that $\partial V\subset U$ and $V$ contains a connected component $C_0$ of $\partial U$. Since $\partial V$ is compact, we have $\partial V\subset D_{k_n}$ for all large enough $n\in\N$. By assumption, $ D_{k_n}$ is simply connected and thus $C_0\subset V\subset \br V \subset D_{k_n}$ for all large enough $n\in\N$. Let $\zeta \in C_0$. Then $\dist (\zeta, \partial D_{k_n})\ge \dist (\zeta,\partial V)\ge \dist(C_0,\partial V)$ for all large enough $n\in\N$. Therefore, applying Lemma \ref{distconvergence}, we deduce that
\[\dist (\zeta, Y)=\lim_{n\to\infty}\dist (\zeta, \partial D_{k_n})\ge \dist(C_0,\partial V)>0.\]
Since $\zeta\in C_0$ was arbitrary, we have $\dist (C_0, Y)>0$. This contradicts the fact that $C_0\subset Y$. So, $U$ is simply connected.
\end{proof}

We prove a general property of weak tangents of the boundary of a bounded simply connected domain.

\begin{lemma}\label{unboundedY} Let $D\subset \C$ be a bounded simply connected domain. For every weak tangent $Y$ of $\partial D$, every connected component of $Y$ is unbounded.
\end{lemma}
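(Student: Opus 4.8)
The plan is to argue by contradiction: suppose $Y$ has a bounded connected component $C_0$. Since $Y$ is closed and $C_0$ is a bounded component, a standard topological fact (using that $\C$ is locally connected and $Y$ is closed) gives that $C_0$ is actually a closed subset of $\C$ that is at positive distance from $Y\setminus C_0$; concretely, one can find a bounded open set $V$ with $C_0\subset V$, $\overline V\cap(Y\setminus C_0)=\emptyset$, and $\partial V\subset \C\setminus Y$. The idea is then that $\partial V$ is a compact set avoiding $Y$, so by Proposition \ref{equivalencelimitpoints} (more precisely the argument in Lemma \ref{d_componentcompact}) we have $\partial V\subset \C\setminus\partial D_{k_n}$ for all large $n$ along the defining subsequence, hence $\partial V$ lies in a single connected component of $\C\setminus\partial D_{k_n}$, i.e.\ either entirely inside $D_{k_n}$ or entirely in the unbounded complementary component (here we use that $D$, and hence each $D_n$, is simply connected, so $\C\setminus\partial D_n$ has exactly two components when $\partial D_n$ is a Jordan curve — or more carefully, we use the simple connectivity of $D_n$ to control the topology).

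Next I would split into the two cases. If $\partial V\subset D_{k_n}$ for infinitely many $n$, then since $D_{k_n}$ is simply connected, $\overline V\subset D_{k_n}$ as well, so every point $\zeta\in C_0$ satisfies $\dist(\zeta,\partial D_{k_n})\ge\dist(C_0,\partial V)>0$; by Lemma \ref{distconvergence} this gives $\dist(\zeta,Y)\ge\dist(C_0,\partial V)>0$, contradicting $\zeta\in C_0\subset Y$. This is exactly the simple-connectivity argument already used at the end of the proof of Lemma \ref{dcompounbounded}. The remaining case is that $\partial V$ lies in the unbounded component of $\C\setminus\partial D_{k_n}$ (i.e.\ outside $\overline{D_{k_n}}$) for infinitely many $n$; then $\overline V$ is disjoint from $\overline{D_{k_n}}$, so again every $\zeta\in C_0$ has $\dist(\zeta,\partial D_{k_n})\ge\dist(C_0,\partial V)>0$, and the same application of Lemma \ref{distconvergence} yields the same contradiction. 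Either way, $C_0$ cannot be bounded.

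The step I expect to be the main obstacle is the topological separation at the start: making precise that a bounded connected component $C_0$ of the closed set $Y$ can be surrounded by a bounded open set $V$ whose boundary misses $Y$ entirely. For a general closed set, components need not be relatively open, so one cannot simply take $V$ to be a neighborhood of $C_0$; one needs the fact that in a locally compact, locally connected space the connected component of a point in a closed set coincides with its quasicomponent when that component is compact (a version of ``Šura-Bura's lemma''), which provides a separation of $C_0$ from the rest of $Y$ by a set that is both open and closed in $Y$, and from there one inflates to the open set $V$ in $\C$. Once $V$ is in hand, the rest is a direct repetition of the distance-convergence argument from Lemma \ref{dcompounbounded}, together with the observation that simple connectivity of each $D_n$ forces the ``inside vs.\ outside'' dichotomy for the compact connected set $\partial V$ relative to $\partial D_n$.
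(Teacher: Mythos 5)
There is a genuine gap in your second case. From ``$\partial V$ lies in the unbounded component of $\C\setminus\partial D_{k_n}$'' you conclude that $\overline V$ is disjoint from $\overline{D_{k_n}}$, but this does not follow: the compact connected set $\overline{D_{k_n}}$ is disjoint from $\partial V$, so either $\overline{D_{k_n}}\cap\overline V=\emptyset$ \emph{or} $\overline{D_{k_n}}\subset V$, and the second possibility (the entire rescaled domain sitting inside $V$, with $\partial V$ indeed in the unbounded component of $\C\setminus\partial D_{k_n}$) is precisely the scenario you must kill; in it your argument gives no lower bound on $\dist(\zeta,\partial D_{k_n})$ for $\zeta\in C_0$. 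Ruling it out needs the one ingredient you never invoke, namely \eqref{diaminfinity}: since $D$ is bounded, $\diam \partial D_n=(\diam\partial D)/\lambda_n\to\infty$ while $V$ is a fixed bounded set, so $\overline{D_{k_n}}\subset V$ is impossible for large $n$ (this is also exactly where the boundedness hypothesis enters). Two further inaccuracies: the claim that $C_0$ is at positive distance from $Y\setminus C_0$, hence that one may take $\overline V\cap(Y\setminus C_0)=\emptyset$, is false in general, since other components of a closed set can accumulate on $C_0$ (e.g.\ $\{0\}\times[0,1]$ together with the segments $\{1/n\}\times[0,1]$); what the \v{S}ura--Bura argument actually gives --- and all you need --- is a compact set clopen in $Y$ containing $C_0$, which after inflating by a small $\delta$ yields a bounded open $V\supset C_0$ with $\partial V\cap Y=\emptyset$ and $\dist(C_0,\partial V)>0$. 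Also $\C\setminus\partial D_{k_n}$ may have several bounded components and $\partial V$ produced this way need not be connected, so your two cases are not exhaustive as stated.

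Once these points are repaired your strategy does work, and in fact simplifies so that no case analysis and no connectedness of $\partial V$ are needed: since $D$ is bounded and simply connected, $\partial D_{k_n}$ is connected; for large $n$ it is disjoint from the compact set $\partial V$ (by Proposition \ref{equivalencelimitpoints}, as in Lemma \ref{d_componentcompact}), and by \eqref{diaminfinity} it cannot lie in $V$, hence $\partial D_{k_n}\cap\overline V=\emptyset$, so every $\zeta\in C_0\subset V$ satisfies $\dist(\zeta,\partial D_{k_n})\ge\dist(C_0,\partial V)>0$, and Lemma \ref{distconvergence} gives $\dist(\zeta,Y)>0$, contradicting $\zeta\in Y$. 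This is related to but different from the paper's proof, which does not use the distance estimate of Lemma \ref{distconvergence} at all: it splits according to whether $Y$ has a second component (then Zoretti's theorem provides a Jordan region $V$ with $\partial V\cap Y=\emptyset$ separating the two components, and the connected set $\partial D_n$, having points on both sides of $\partial V$, must meet $\partial V$, producing via Proposition \ref{equivalencelimitpoints} a limit point of $\partial D_n$ in $\partial V\cap Y=\emptyset$) or $Y=C_0$ is bounded and connected (the same crossing argument with $\partial B(0,R)$ and \eqref{diaminfinity}). Both routes rest on the same two facts --- connectedness of $\partial D_n$ and the blow-up of its diameter --- which your write-up leaves implicit or omits.
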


\begin{proof} Let $Y$ be a weak tangent of $\partial D$. Suppose that $Y$ has one bounded connected component $Y_0$. First, suppose that $Y$ has another connected component $Y_1$. By Zoretti's theorem \cite[Corollary VI.3.11, p.\ 109]{Zor} there is a Jordan region $V$ such that $\partial V\cap Y=\emptyset$, $Y_0\subset V$, and $Y_1\subset \C\setminus \overline{V}$. Let $z\in Y_0$ and $w\in Y_1$. By Proposition \ref{equivalencelimitpoints} there are sequences $z_n,w_n\in \partial D_n$, $n\in\N$, such that $z_n\to z$ and $w_n\to w$ as $n\to \infty$. So, there is $N\in\N$ such that, for every $n\ge N$, $z_n\in V$ and $w_n\in \C \setminus \br V$. By assumption, $\partial D_n$ is connected for all $n\in\N$. So, for every $n\ge N$ there is  $x_n\in \partial D_n \cap \partial V$. Then there is a subsequence of $\{x_n\}_{n\ge N}$ that converges to a point $x\in \partial V$. Proposition \ref{equivalencelimitpoints} implies that $x\in Y$, which leads to contradiction, because $\partial V\cap Y=\emptyset$.

Now, suppose that $Y=Y_0$. Since $Y$ is bounded, there is $R>0$ such that $Y\subset B(0,R)$. By \eqref{diaminfinity} there is $N_1\in\N$ such that, for every $n\ge N_1$, $\diam \partial D_n >2R$ and hence $\partial D_n\not\subset \br B(0,R)$. So, for each $n\ge N_1$, there exists $z_n\in \partial D_n\setminus \br B(0,R)$. Now, let $y\in Y$. By Proposition \ref{equivalencelimitpoints} there is a sequence $y_n\in \partial D_n$, $n\in\N$, that converges to $y$. So, there is $N_2\ge N_1$ such that $y_n \in B(0,R)$ for all $n\ge N_2$. Since $\partial D_n$ is connected, for every $n\ge N_2$ there is $x_n\in \partial D_n \cap \partial B(0,R)$. By compactness there is a subsequence of $\{x_n\}_{n\ge N_2}$ converging to a point $x\in \partial B(0,R)$. Also, by Proposition  \ref{equivalencelimitpoints}, $x\in Y$, which contradicts the fact that $Y\cap \partial B(0,R)=\emptyset$.
\end{proof}
 
The following lemma, which is an application of our main  theorem, gives a uniform bound for the number of $D$-components in the case that $D$ is a simply connected  John domain.

\begin{lemma}\label{upperbound}
Let $A\ge 1$. Let $D\subset \C$ be a simply connected $A$-John domain. For every weak tangent $Y$ of $\partial D$, the number of $D$-components of $\C\setminus Y$ is bounded by $4\pi A$.
\end{lemma}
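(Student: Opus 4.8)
The plan is to fix a $D$-component $U$ of $\mathbb{C}\setminus Y$, pick a base point, and use the $A$-John property together with the Carath\'eodory convergence from Lemma \ref{d_componentcompact} to show that $U$ occupies a definite angular sector at infinity — so that only boundedly many disjoint $D$-components can fit. More precisely, recall from Lemma \ref{dcompounbounded} that each $D$-component is unbounded, and from Lemma \ref{d_componentcompact} that for a fixed $z\in U$ there is a subsequence $\{D_{k_n}\}$ with $D_{k_n}\to U$ in the Carath\'eodory sense with base at $z$ and $K\subset D_{k_n}$ for every compact $K\subset U$ and all large $n$. First I would normalize: since the problem is scale- and translation-invariant in a suitable sense, it suffices to control the ``angular size'' of $U$ as seen from a fixed point. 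The key quantitative input is that an $A$-John domain, viewed from any interior point $z$ at distance $\delta=\dist(z,\partial D)$, contains a John curve out to any far-away point, and such a curve cannot wind: the portion of the curve at distance $r$ from $z$ stays within distance $Ar$ (roughly) of $\partial D$...

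Actually the cleaner route: fix distinct $D$-components $U_1,\dots,U_m$ of $\mathbb{C}\setminus Y$ and points $z_j\in U_j$. Using Lemma \ref{d_componentcompact} repeatedly (and a diagonal argument) we may pass to one subsequence $\{D_{k_n}\}$ so that $z_j\in D_{k_n}$ for all $j$ and all large $n$. For each $j$, since $U_j$ is unbounded, pick $w_j\in U_j$ with $|w_j-z_j|$ large; for large $n$ both $z_j,w_j\in D_{k_n}$, which is $A$-John, so there is an $A$-John curve $\Gamma_j^n$ from $z_j$ to $w_j$ in $D_{k_n}$. Let $E_j^n$ be the initial subarc of $\Gamma_j^n$ from $z_j$ until it first hits $\partial B(z_j,R)$ for a suitable fixed large $R$. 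As in the proof of Lemma \ref{dcompounbounded}, passing to a further subsequence and using Blaschke's theorem, $E_j^n$ converges in the Hausdorff sense to a compact connected set $E_j\subset \overline B(z_j,R)\cap U_j$ joining $z_j$ to $\partial B(z_j,R)$. The John condition $\min\{\ell(\Gamma_j^n|_{[0,t]}),\ell(\Gamma_j^n|_{[t,1]})\}\le A\dist(\Gamma_j^n(t),\partial D_{k_n})$ forces, for points $x$ on $E_j^n$ with $|x-z_j|\ge R/2$ say, that $\dist(x,\partial D_{k_n})\ge R/(2A)$; by Lemma \ref{distconvergence} the same lower bound $\dist(x,Y)\ge R/(2A)$ holds for $x\in E_j$ with $|x-z_j|\ge R/2$. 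So each $E_j$ supplies a point $\xi_j\in\partial B(z_j,R)$ with a ball $B(\xi_j, R/(2A))$ contained in the component $U_j$, hence disjoint from $Y$ and from all the other $U_i$.

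To convert this into a bound I would now choose a single large radius and view everything from a common scale. Fix $R$ large compared to all the $|z_i-z_j|$. Then the points $\xi_1,\dots,\xi_m$ all lie in an annulus of inner radius about $R$ and outer radius about $2R$ centered anywhere among the $z_j$, and each carries a disjoint ball of radius $R/(2A)$ lying in its own component. By a packing argument — disjoint disks of radius $R/(2A)$ whose centers lie on a circle of radius $\sim R$ — the number $m$ of them is at most the circumference $2\pi R$ divided by the diameter $R/A$ of each disk, i.e. $m\le 2\pi R/(R/(2A))=4\pi A$. (One adjusts constants: to get exactly $4\pi A$ one should be slightly careful and take $R$ genuinely large relative to the mutual distances of the $z_j$ and relative to $1$, so that the annulus is thin in relative terms and the disks of radius $R/(2A)$ centered on the circle $|w-z_1|=R$ are essentially disjoint arcs of angular width $\ge 1/A$ each; since total angle is $2\pi$ this gives $m\cdot(1/(2A))\le 2\pi$, i.e. $m\le 4\pi A$.) Since $U_1,\dots,U_m$ were arbitrary distinct $D$-components, the total number of $D$-components is at most $4\pi A$.

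The main obstacle I expect is the bookkeeping in the last step: making the angular-packing estimate yield the clean constant $4\pi A$ rather than just ``$\le C(A)$''. This requires carefully quantifying how far out one must go (the radius $R$) so that the John curves from different components have separated so decisively that the balls $B(\xi_j,R/(2A))$ are pairwise disjoint, and then reading off the angular width each such ball subtends at a common center. The John condition gives exactly the right linear-in-$r$ lower bound on $\dist(\cdot,\partial D_{k_n})$ along the curve, which after passing to the limit via Lemma \ref{distconvergence} gives the linear-in-$R$ radius for the separated ball; the factor $2$ and the $\pi$ come out of the elementary geometry of disks on a circle. A secondary point to handle carefully is that the subsequence in Lemma \ref{d_componentcompact} depends on the component and the chosen point, so one must first fix $m$ components and extract a common subsequence before running the Blaschke compactness argument for each.
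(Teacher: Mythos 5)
Your route is viable and genuinely different from the paper's, but one step is false as written: the claim that, by a diagonal argument, there is a single subsequence $\{D_{k_n}\}$ with $z_j\in D_{k_n}$ for every $j=1,\dots,m$ and all large $n$. Distinct $D$-components may be witnessed along disjoint sets of indices. For instance, let $D$ be the unit disk, $p_n=1$ for even $n$, $p_n=-1$ for odd $n$, and $\lambda_n\to 0$: then $\partial D_n$ converges locally Hausdorff to the imaginary axis $Y$, both half-planes are $D$-components, but $D_n$ lies in the left half-plane for even $n$ and in the right half-plane for odd $n$, so no subsequence contains a fixed point of each half-plane simultaneously. Fortunately your argument never needs simultaneity: the objects you extract (the limit continuum $E_j$, the point $\xi_j\in\partial B(z_j,R)$ with $\dist(\xi_j,Y)\ge R/(2A)$, hence the ball $B(\xi_j,R/(2A))\subset U_j$) are statements about the limit configuration alone and can be produced for each $j$ along its own subsequence furnished by Lemma \ref{d_componentcompact}; the disjointness of the balls then comes purely from their lying in distinct components of $\C\setminus Y$. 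So delete the common-subsequence step and run the compactness argument component by component.

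With that repair, the comparison with the paper is as follows. The paper first invokes Proposition \ref{propmain1}, so each $D$-component $U_i$ is itself an $A$-John domain, and then argues entirely in the limit: an $A$-John curve in $U_i$ from a point of $U_i\cap B(0,R)$ to a point outside $\br{B}(0,3R)$ crosses $\partial B(0,2R)$ at a point whose distance to $\partial U_i$ is at least $R/A$, and this distance is at most the length $2R\theta^i(2R)$ of the shortest arc of $\partial B(0,2R)$ from that point to $\partial U_i$; these arcs lie in distinct $U_i$, hence are disjoint, and summing the angles gives $N\le 4\pi A$ at once. Your version re-derives the quantitative ingredient (a ball of radius comparable to $R/A$ inside each component at scale $R$) directly from John curves in the rescaled domains $D_{k_n}$, via Blaschke's theorem and Lemma \ref{distconvergence} exactly as in the proof of Lemma \ref{dcompounbounded}, and finishes with a disk-packing count; done carefully as you indicate (disjoint open balls of radius $R/(2A)$ force centers at mutual distance $\ge R/A$, hence angular gaps of size about $1/A$ on a circle of radius comparable to $R$, the error from the differing centers $z_j$ vanishing as $R\to\infty$), this even yields the slightly better bound $2\pi A\le 4\pi A$. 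The paper's proof is shorter because Proposition \ref{propmain1} has already packaged the passage to the limit; yours avoids Proposition \ref{propmain1} at the cost of repeating that compactness argument.
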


\begin{proof} Let $Y$ be a weak tangent of $\partial D$. Suppose that there is at least one $D$-component of $\C\setminus Y$. Let $U_1,U_2,\dots,U_N$ be distinct $D$-components of $\C\setminus Y$, where $N\ge 1$. By Proposition \ref{propmain1} these connected components are $A$-John domains and by Lemma \ref{dcompounbounded} they are unbounded. Thus, there is $R>0$ such that $\partial B(0,R)$ intersects $U_1,U_2,\dots,U_N$ and, for each $i=1,2,\dots,N$, there exist $z_i\in U_i\cap B(0,R)$ and $w_i\in U_i\setminus \br B(0,3R)$. Then there is a curve $\gamma_i\colon [0,1]\to U_i$ such that $\gamma_i(0)=z_i$, $\gamma_i(1)=w_i$ and for every $t\in [0,1]$,
\[\min\{\ell (\gamma_i|_{[0,t]}),\ell (\gamma_i|_{[t,1]})\}\le A\dist (\gamma_i(t),\partial U_i).\]
We choose $t_i\in (0,1)$ such that $\gamma_i(t_i)\in \partial B(0,2R)$. So, we have
\[R\le A\dist (\gamma_i(t_i),\partial U_i)\le 2AR\theta^i(2R),\]
where $2R\theta^i(2R)$ is the length of the smallest arc on $\partial B(0,2R)$ joining $\gamma_i(t_i)$ to $\partial U_i$. Now, we take the sum for all $i=1,2,\dots,N$ and we get
\[NR\le  2AR\sum_{i=1}^{N}\theta^i(2R)  \le 4AR\pi.\]
So, we derive that $N\le 4\pi A$.
\end{proof}

Based on the above lemmas, we establish a result that describes a weak tangent $Y$ of $\partial D$ in terms of the boundaries of $D$-components.

\begin{lemma}\label{components}
Let $D\subset \C$ be a bounded simply connected John domain. For every weak tangent $Y$ of $\partial D$ and for every $z\in Y$ there exists a $D$-component $U$ such that $z\in \partial U$. In particular, $Y$ is equal to the union of the boundaries of $D$-components of $\C\setminus Y$.
\end{lemma}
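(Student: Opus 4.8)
The plan is to fix a weak tangent $Y$ of $\partial D$, arising from points $p_n \in \partial D$ and scales $\lambda_n \to 0$, with associated rescaled domains $D_n = (D-p_n)/\lambda_n$. Fix $z \in Y$. By Proposition \ref{equivalencelimitpoints} there is a sequence $z_n \in \partial D_n$ with $z_n \to z$. For each $n$, pick a boundary point $z_n$ of $D_n$; since $z_n \in \partial D_n$, every ball $B(z_n,\rho)$ meets $D_n$. The idea is to produce, for a suitable radius, a point of $D_n$ near $z_n$ lying in a controlled ``channel'' into $D_n$ that survives in the limit, i.e. lands in a $D$-component $U$ with $z \in \overline U$.

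First I would exploit the John condition of $D$ (equivalently of each $D_n$, with the same constant $A$). Choose any fixed point $a \in D$; for large $n$ the rescaled point $a_n = (a-p_n)/\lambda_n$ lies in $D_n$ and, by \eqref{diaminfinity}, $\dist(a_n,\partial D_n) \to \infty$ while $z_n$ stays bounded. Thus for large $n$ we may take an $A$-John curve $\Gamma_n$ in $D_n$ from a point $\zeta_n \in D_n$ with $|\zeta_n - z_n|$ small (chosen so that $\dist(\zeta_n,\partial D_n) = |\zeta_n - z_n|$, i.e.\ $\zeta_n$ is a point of $D_n$ realizing a near-minimal distance to $\partial D_n$ in a small ball around $z_n$) to $a_n$. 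Along such a John curve, the distance to the boundary grows at a controlled linear rate as one moves away from $\zeta_n$: if $s$ is arclength from $\zeta_n$, then $\dist(\Gamma_n(s),\partial D_n) \ge s/A - \dist(\zeta_n,\partial D_n)$ is not quite what we want; rather, the relevant estimate is that a point at arclength $s$ along the curve has boundary distance at least comparable to $s$ once $s$ exceeds the starting boundary distance. I would use this to locate, for each fixed small $r>0$ and all large $n$, a point $q_n \in |\Gamma_n|$ with $|q_n - z_n| \le 2r$ (say) and $\dist(q_n,\partial D_n) \ge c\, r$ for a constant $c = c(A) > 0$: intuitively, after leaving the starting point one quickly escapes the boundary, and the first time the curve exits $B(z_n, 2r)$ it has boundary distance bounded below by $c r$ by the John inequality applied to the subcurve from $\zeta_n$ to that exit point (whose length is at least $r$), provided we also arrange $|\zeta_n - z_n|$ to be much smaller than $r$, which is possible since $z_n \in \partial D_n$.

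Next I would pass to a convergent subsequence: the points $q_n$ lie in the bounded set $\overline B(z,3r)$, so after passing to a subsequence $q_n \to q$ with $|q - z| \le 2r$ and, by Lemma \ref{distconvergence}, $\dist(q,Y) = \lim_n \dist(q_n,\partial D_n) \ge c r > 0$. Hence $q \in \C \setminus Y$, so $q$ lies in some connected component $U$ of $\C \setminus Y$. Since $q_n \to q$ and $\dist(q_n,\partial D_n)$ is bounded below, for all large $n$ the fixed point $q$ itself lies in $B(q_n, \dist(q_n,\partial D_n)) \subset D_n$, because $D_n$ is connected and this ball avoids $\partial D_n$; thus $q \in D_n$ for infinitely many $n$, which means $U$ is a $D$-component. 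Finally, since $|q - z| \le 2r$ and $r>0$ was arbitrary, letting $r \to 0$ (and noting $z \in Y$ while the points $q = q(r)$ lie in $D$-components, which are contained in $\C\setminus Y$) forces $z \in \overline{\bigcup_{U \text{ a } D\text{-component}} U}$; more carefully, for each $r$ the point $q(r)$ lies in some $D$-component $U_r$ with $|q(r) - z| \le 2r$, so $z \in \partial U_r$ once we check $z \notin U_r$ (true since $z \in Y$), giving a $D$-component whose boundary contains $z$ — but to get a single $U$ with $z \in \partial U$ I would instead argue directly: the $q(r)$ can be taken along a single limiting channel, or one invokes that the number of $D$-components is finite (Lemma \ref{upperbound}) so some fixed $D$-component $U$ contains $q(r)$ for a sequence $r \to 0$, whence $z \in \partial U$. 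The last sentence of the lemma, that $Y$ equals the union of the boundaries of $D$-components, then follows: ``$\subseteq$'' is exactly what we just proved, and ``$\supseteq$'' holds because each $D$-component $U$ is an open subset of $\C \setminus Y$, so $\partial U \subseteq Y$.

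The main obstacle I expect is the quantitative step of producing the point $q_n$ with $\dist(q_n,\partial D_n) \ge c(A)\, r$ and $|q_n - z_n|$ bounded: one must choose the John curve's starting point $\zeta_n$ inside $D_n$ close enough to the boundary point $z_n$ (possible since $z_n \in \partial D_n$) that its tiny starting boundary distance does not spoil the lower bound coming from the John inequality applied to the subcurve of length $\ge r$, and one must handle the ``min'' in the John condition by taking $n$ large enough that the far endpoint $a_n$ has $\dist(a_n,\partial D_n)$ and curve-length to $a_n$ much larger than $r$, so that the binding side of the min is the one measured from $\zeta_n$. Everything else is packaging via Lemma \ref{distconvergence}, Proposition \ref{equivalencelimitpoints}, Lemma \ref{upperbound}, and the Blaschke/Carathéodory machinery already set up.
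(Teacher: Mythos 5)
Your proof is correct, and its overall skeleton matches the paper's: for each small scale you produce a point near $z$ whose distance to $Y$ is bounded below by a fixed multiple of the scale, show via Lemma \ref{distconvergence} and a ball argument that it lies in a $D$-component, and then use the finiteness of the number of $D$-components (Lemma \ref{upperbound}) together with $z\in Y$ to pin down a single $D$-component $U$ with $z\in\partial U$; the ``in particular'' part follows as you say, since $\partial U\subset Y$ for every component $U$ of $\C\setminus Y$. Where you genuinely diverge is in how that point is produced. The paper first invokes Lemma \ref{unboundedY} to find a second point $w\in Y$ far from $z$, lifts $z,w$ to boundary points $z_n,w_n\in\partial D_n$, and applies N\"akki--V\"ais\"al\"a's Theorem 6.5, which provides John curves joining \emph{boundary} points through the interior, evaluating the John inequality at a crossing of $\partial B(z,\varepsilon)$. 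You instead use only the interior John condition: you join an interior point $\zeta_n\in D_n$ with $|\zeta_n-z_n|$ much smaller than $r$ to the rescaled base point $a_n=(a-p_n)/\lambda_n$, whose distance from $z_n$ and from $\partial D_n$ blows up as $\lambda_n\to 0$, and evaluate the John inequality at the first exit of this curve from $B(z_n,2r)$, where both sides of the minimum are at least $r$, giving $\dist(q_n,\partial D_n)\ge r/A$. This route avoids both the boundary-John-curve theorem and Lemma \ref{unboundedY}, so it is somewhat more self-contained, at the cost of the bookkeeping you correctly identify (choosing $\zeta_n$ close enough to $z_n$, and $n$ large enough that the far side of the minimum is not the binding one); the remaining limiting and component arguments are carried out correctly and coincide with the paper's.
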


\begin{proof} Suppose that $D$ is an $A$-John domain for some $A\geq 1$. Let $Y$ be a weak tangent of $\partial D$. Let $z\in Y$ and $\varepsilon>0$. By Lemma \ref{unboundedY}, $Y$ is unbounded and thus there is $w\in Y \setminus   B(z,2\varepsilon)$. By Proposition \ref{equivalencelimitpoints}, there are  sequences $z_n,w_n\in \partial D_n$, $n\in\N$, such that $z_n\to z$ and $w_n\to w$ as $n\to \infty$. So, there is $N\in \N$ such that, for every $n\ge N$, $z_n\in B(z,\varepsilon)$ and $w_n\in B(w,\varepsilon)$. For each $n\ge N$, by \cite[Theorem 6.5]{Nakki} there is a curve $\gamma_n\colon [0,1]\to  \br {D}_n$ with $\gamma_n(0)=z_n$, $\gamma_n(1)=w_n$, $\gamma_n((0,1))\subset D_n$ and
\[\min \{\ell (\gamma_n|_{[0,t]}), \ell(\gamma_n|_{[t,1]} ) \}\le A\dist(\gamma_n(t),\partial D_n),\]
for every $t\in [0,1]$. Take $t_n\in(0,1)$ such that $\zeta_n:=\gamma_n(t_n)\in \partial B(z,\varepsilon)\cap D_n$. Then 
\[\min\{|z_n-\zeta_n|,|\zeta_n-w_n|\}\le A\dist(\zeta_n,\partial D_n). \]
Moreover, there is a subsequence $\{\zeta_{k_n}\}_{n\in \N}$ of $\{\zeta_n\}_{n\ge N}$ that converges to a point $\zeta({\varepsilon})\in \partial B(z,\varepsilon)$. So, taking limits in the above relation as $n\to \infty$, by Lemma \ref{distconvergence} we infer that
\begin{equation}\label{distgreaterepsilon}
\varepsilon=|z-\zeta({\varepsilon})|\le A\dist(\zeta({\varepsilon}),Y).
\end{equation} 
This implies that $\zeta({\varepsilon})\notin Y$. So, $\zeta({\varepsilon})$ belongs to a connected component $U_{\varepsilon}$ of $\C\setminus Y$. 

We now show that $U_{\varepsilon}$ is a $D$-component. Lemma \ref{distconvergence} and \eqref{distgreaterepsilon} give
\[\lim_{n\to\infty}\dist(\zeta(\varepsilon),\partial D_{k_n})=\dist (\zeta({\varepsilon}),Y)\ge \varepsilon/A\]
and thus $\overline B(\zeta(\varepsilon),\varepsilon/(2A))\cap \partial D_{k_n}=\emptyset$ for all sufficiently large $n\in \N$. Moreover, $\zeta_{k_n}\in  B(\zeta(\varepsilon),\varepsilon/(2A))\cap D_{k_n}$ for all sufficiently large $n\in \N$. Hence, $\overline B(\zeta(\varepsilon),\varepsilon/(2A)) \subset D_{k_n}$ and thus $\zeta(\varepsilon)\in D_{k_n}$ for all sufficiently large $n\in \N$. This proves that $U_\varepsilon$ is a $D$-component.

We apply the above argument for $\varepsilon=1/n$, $n\in\N$, and we get a sequence of points $\zeta({1/n})$, $n\in\N$, such that $|z-\zeta(1/n)|=1/n$ and $\zeta(1/n)\in U_{1/n}$, where $U_{1/n}$ is a $D$-component of $\C\setminus Y$. By Lemma \ref{upperbound} the number of $D$-components is uniformly bounded and thus we derive that $\zeta(1/n)$  belongs to a $D$-component $U_{0}$ for infinitely many $n\in \N$. Since $\zeta(1/n)\to z$ as $n\to\infty$ and $z\in Y\subset \C\setminus U_0$, we have $z\in \partial U_{0}$. 
\end{proof}

The following lemma is independent of all our previous results and asserts that the boundary of a simply connected John domain has a uniformly bounded number of connected components. The idea is that if this number is not bounded, then the domain must have narrow passages or bottlenecks, which is prohibited by the John domain condition.

\begin{lemma}\label{johnboundcomp} Let $D\subset \C$ be a simply connected $A$-John domain for some $A\ge 1$. Then the number of connected components of $\partial D$ is bounded by $4\pi A$.
\end{lemma}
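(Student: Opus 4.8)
The plan is to mimic the argument of Lemma~\ref{upperbound}, but now at the scale of the domain $D$ itself rather than at the scale of a weak tangent. The key point is that an $A$-John domain, being connected, can reach out arbitrarily far, and each connected component of $\partial D$ occupies a definite angular sector when seen from a fixed point, so only finitely many components can fit. First I would fix a point $a\in D$ and suppose, for contradiction, that $\partial D$ has at least $N$ distinct connected components $C_1,\dots,C_N$. Since each $C_i$ is a connected component of the boundary of a John domain, I need a point of $D$ ``on the far side'' of each $C_i$; concretely, I would choose $R>0$ large enough that $B(a,R)$ meets every $C_i$ and also contains a point $z_i\in D$ close to $C_i$ (e.g.\ take $z_i\in D$ with $\dist(z_i,C_i)$ small), and then exploit unboundedness of $D$ (which follows since $\partial D$ is disconnected, hence $D$ cannot be all of $\C$, yet a John domain with disconnected boundary must be unbounded --- alternatively just assume $D$ bounded and run the same angular count using points near $\partial D$) to find $w_i\in D\setminus\overline B(a,3R)$ in the same ``complementary region'' relative to $C_i$ as $z_i$.

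The heart of the argument is then the angular count. For each $i$ connect $z_i$ to $w_i$ by an $A$-John curve $\gamma_i\colon[0,1]\to D$, pick $t_i$ with $\gamma_i(t_i)\in\partial B(a,2R)$, and use the John condition at that parameter:
\[
R\le\min\{\ell(\gamma_i|_{[0,t_i]}),\ell(\gamma_i|_{[t_i,1]})\}\le A\,\dist(\gamma_i(t_i),\partial D)\le 2AR\,\theta_i,
\]
where $2R\theta_i$ is the length of the shortest arc of $\partial B(a,2R)$ joining $\gamma_i(t_i)$ to $\partial D$; since $C_i$ separates $\gamma_i(t_i)$ from the relevant part of the circle, this arc realizes an angular aperture $\theta_i$ that is ``charged'' to $C_i$. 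Because the $C_i$ are pairwise disjoint closed sets, I would argue the corresponding arcs (or the angular intervals they subtend at $a$) are essentially disjoint up to a controlled overlap, so $\sum_i\theta_i\le 2\pi$. Summing the displayed inequality over $i$ yields $N R\le 2AR\sum_i\theta_i\le 4\pi AR$, hence $N\le 4\pi A$.

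The step I expect to be the main obstacle is making precise the claim that the angular contributions $\theta_i$ are (almost) disjoint, i.e.\ that distinct boundary components of $D$ cannot be ``shielded by the same angular wedge'' at $a$. For this I would use that $D$ is simply connected: each $C_i$ together with the point $\infty$ (or the unbounded complementary component) bounds a region $V_i\subset\C\setminus D$, and for distinct $i,j$ these regions $V_i,V_j$ are disjoint, so the arcs of $\partial B(0,2R)$ lying ``behind'' $C_i$ and ``behind'' $C_j$ are disjoint; the arc from $\gamma_i(t_i)$ to $\partial D$ along $\partial B(a,2R)$ lands in (the closure of) one such region, and one can route it so it lands in $\overline V_i$, giving disjointness of the $\theta_i$'s up to endpoints. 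Alternatively, and perhaps more cleanly, I would reduce to Lemma~\ref{upperbound}: observe that a weak tangent $Y$ of $\partial D$ obtained by blowing up near a suitable point has each $C_i$ contributing a separate $D$-component (or apply a normal-families/kernel argument directly), so the bound $4\pi A$ on $D$-components transfers to a bound on components of $\partial D$; but I suspect the direct angular argument above is shorter and self-contained.
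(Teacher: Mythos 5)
Your skeleton is the same as the paper's: run $A$-John curves out past the circle $\partial B(\cdot,2R)$, apply the John condition at the crossing point to get $R\le A\,\dist(\gamma_i(t_i),\partial D)$, convert this into an angular aperture $\theta_i$ on that circle, and conclude from $\sum_i\theta_i\le 2\pi$ that $N\le 4\pi A$. But the step you yourself flag as the main obstacle --- that the $N$ apertures are (essentially) disjoint --- is precisely the content of the argument, and neither of your patches closes it. First, you take the shortest arc from $\gamma_i(t_i)$ to $\partial D$, not to $C_i$; then nothing ties the $i$-th aperture to the $i$-th component. Several crossing points $\gamma_i(t_i)$ can lie in the same component arc of $D\cap\partial B(a,2R)$ and have their shortest arcs to $\partial D$ end at one and the same boundary point; these arcs are nested, each has length at least $R/A$, yet their union is a single arc, so $\sum_i\theta_i\le 2\pi$ simply does not follow. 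Second, the patch via regions $V_i$ ``behind'' $C_i$ fails on two counts: a boundary component of a simply connected John domain may be a slit-type continuum bounding no complementary region at all, so $V_i$ can be empty; and even when the $V_i$ exist, knowing that the arcs \emph{terminate} in pairwise disjoint sets $\overline V_i$ does not make the arcs themselves disjoint, since one arc may sweep across another's aperture before reaching its own endpoint. The vague separation claim (``$C_i$ separates $\gamma_i(t_i)$ from the relevant part of the circle'') is also unsubstantiated: a component such as a ray separates nothing, and there is no reason the nearest circle-direction boundary point from $\gamma_i(t_i)$ lies on $C_i$. Third, the alternative reduction to Lemma \ref{upperbound} is not available: $D$-components are components of the complement of a weak tangent that originate from $D$, not objects attached to components of $\partial D$; and since weak tangents use scales $\lambda_n\to0$, they only record the structure of $\partial D$ near the base points, so distinct boundary components at positive distance do not all survive in a single weak tangent. (Moreover the lemma is needed exactly for the unbounded domains arising as $D$-components, where the paper's bounded-domain blow-up machinery does not directly apply.)

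For comparison, the paper makes the charging to $C_i$ explicit at the level of the construction: it fixes a single base point $z\in D\cap B(0,R)$, sets $\varepsilon$ equal to one third of the minimal pairwise distance of the chosen components so that the neighborhoods $N_\varepsilon(C_i)$ are disjoint, chooses the far endpoint of the $i$-th John curve inside $(D\cap N_\varepsilon(C_i))\setminus\overline B(0,3R)$, and then bounds $\dist(\gamma_i(t_i),\partial D)\le\dist(\gamma_i(t_i),C_i)$ by the length $2R\,\theta^i(2R)$ of the shortest circle-arc from $\gamma_i(t_i)$ to $C_i$ \emph{itself}. It is this attachment of the $i$-th curve and the $i$-th aperture to its own component that makes the count $NR\le 2AR\sum_i\theta^i(2R)\le 4\pi AR$ legitimate, and it is the piece your write-up would still have to supply before the final summation.
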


\begin{proof}
If $\partial D$ is connected then the conclusion is immediate. So, assume that $\partial D$ has at least two connected components. Let $C_1,C_2,\dots,C_N$ be distinct connected components of $\partial D$, where $N\geq 2$. Note that since $D$ is simply connected, $D$ and $C_1,C_2,\dots,C_N$ are all unbounded. We take 
$$d=\min \{\dist(C_i,C_j):i,j\in\{1,2,\dots,N\}\,\,\, {\rm and}\,\,\,i\neq j\}$$
and $\varepsilon=d/3$. So, $N_{\varepsilon} (C_i)\cap N_\varepsilon(C_j)=\emptyset$ for $i\neq j$.  There is $R>0$ such that $\partial B(0,R)$ intersects $C_1,C_2,\dots,C_N$ and $D\cap B(0,R)\neq \emptyset$. Let $z\in D\cap B(0,R)$. For each $i=1,2,\dots,N$ we take $z_i\in (D\cap N_{\varepsilon} (C_i))\setminus \br B(0,3R)$. Then there is a curve $\gamma_i\colon [0,1]\to D$ such that $\gamma_i(0)=z$, $\gamma_i(1)=z_i$ and for every $t\in [0,1]$,
\[\min\{\ell (\gamma_i|_{[0,t]}),\ell (\gamma_i|_{[t,1]})\}\le A\dist (\gamma_i(t),\partial D).\]
We choose $t_i\in (0,1)$ such that $\gamma_i(t_i)\in \partial B(0,2R)$. So, we have
\[R\le A\dist (\gamma_i(t_i),C_i)\le 2AR\theta^i(2R),\]
where $2R\theta^i(2R)$ is the length of the smallest arc on $\partial B(0,2R)$ joining $\gamma_i(t_i)$ to $C_i$. Taking the sum for $i=1,2,\dots,N$, we get $NR\le 4AR\pi$ and thus $N\le 4\pi A$.
\end{proof}

Our last result, which is a consequence of our main theorem and the previous lemmas, gives an alternative proof of Theorem 5.6 in \cite{Kin}.

\begin{corollary}\label{corol}
Let $A\ge 1$. Let $D\subset\C$ be a bounded simply connected $A$-John domain. Then the number of connected components of every weak tangent $Y$ of $\partial D$ is bounded by $16 \pi^2A^2$.
\end{corollary}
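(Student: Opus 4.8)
The plan is to assemble the structural facts already proved about weak tangents of John domains, since each needed ingredient is now available. Fix a weak tangent $Y$ of $\partial D$. Because $D$ is a bounded simply connected John domain, Lemma~\ref{components} tells us that $Y$ equals the union of the boundaries of the $D$-components of $\C\setminus Y$; list these $D$-components as $U_1,\dots,U_N$. By Lemma~\ref{upperbound}, applied to the $A$-John domain $D$, we have $N\le 4\pi A$.

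Next I would bound the number of boundary components of a single $D$-component. By Proposition~\ref{propmain1} each $U_i$ is an $A$-John domain, and by Lemma~\ref{dcompounbounded} each $U_i$ is simply connected; hence Lemma~\ref{johnboundcomp} applies to $U_i$ and shows that $\partial U_i$ has at most $4\pi A$ connected components. Let $\mathcal B$ be the family consisting of all connected components of $\partial U_1,\dots,\partial U_N$. Then $\mathcal B$ is a finite family of closed, connected sets with $|\mathcal B|\le N\cdot 4\pi A\le (4\pi A)^2=16\pi^2A^2$, and by Lemma~\ref{components} its union is exactly $Y$.

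Finally I would show that $|\mathcal B|$ already dominates the number of connected components of $Y$. Each $B\in\mathcal B$ is a connected subset of $Y$, hence is contained in a unique connected component of $Y$. Conversely, if $C$ is a connected component of $Y$ and $z\in C$, then $z\in\partial U_i$ for some $i$, so $z$ lies in some $B\in\mathcal B$; since $B$ is connected, meets $C$, and is contained in $Y$, it lies entirely inside $C$. Thus every connected component of $Y$ contains a member of $\mathcal B$, so the number of connected components of $Y$ is at most $|\mathcal B|\le 16\pi^2A^2$. I expect the only subtle point to be this last step: a connected subset of the finite union $\bigcup_i\partial U_i$ need not lie in a single $\partial U_i$, so one must first refine to the decomposition of $Y$ into the connected components of the individual $\partial U_i$ and use that each of these pieces is itself connected. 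Everything else is a direct citation of the preceding lemmas.
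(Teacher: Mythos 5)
Your proposal is correct and follows essentially the same route as the paper: it invokes Lemma \ref{components}, Lemma \ref{upperbound}, Lemma \ref{dcompounbounded} together with the John property of $D$-components, and Lemma \ref{johnboundcomp}, and then performs the same $(4\pi A)\times(4\pi A)$ count. The only difference is cosmetic: you count the connected pieces of the boundaries $\partial U_i$ covering $Y$ and note each component of $Y$ contains one, while the paper maps each component of $Y$ to a $D$-component and bounds the fibers; these are the same argument.
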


\begin{proof}
Let $Y_1,Y_2,\dots,Y_N$ be distinct connected components of $Y$, where $N\ge 1$. By Lemma \ref{components} every connected component of $Y$ contains a connected component of the boundary of a $D$-component of $\C\setminus Y$. Thus there is a mapping from the set $B=\{Y_1,Y_2,\dots,Y_N\}$ into the set $E$ of $D$-components of $\C\setminus Y$. By Lemma \ref{dcompounbounded} and Theorem \ref{maintheorem}, every $D$-component of $\C\setminus Y$ is a simply connected $A$-John domain. So, Lemma \ref{johnboundcomp} implies that the boundary of each $D$-component has at most $4\pi A$ connected components. We conclude that each element of $E$ has at most $4\pi A$ pre-images in $B$. By Lemma \ref{upperbound}, $E$ has at most $4\pi A$ elements. Since $N$ is equal to the sum of the numbers of pre-images of the elements in $E$, we derive that $N\le (4\pi A)^2$.  
\end{proof}

\begin{bibdiv}
\begin{biblist}

\bib{Bea}{article}{
	title={The hyperbolic metric and geometric function theory},
	author={A.~F. Beardon and D. Minda,},
	journal={Quasiconformal mappings and their applications},
	date={2007},
	pages={9--56}
}

\bib{Bonk}{article}{
	title={Rigidity for quasi-M\"{o}bius group actions},
	author={Bonk, M.},
	author={Kleiner, B.},
	journal={J. Differential Geom.},
	volume={61},
	date={2002},
	number={1},
	pages={81--106}
}

\bib{BonkMer}{article}{
	title={Quasisymmetric rigidity of square Sierpi\'{n}ski carpets},
	author={Bonk, M.},
	author={Merenkov, S.},
	journal={Ann. of Math.},
	volume={177},
	date={2013},
	number={2},
	pages={591–-643}
}

\bib{Kosk2}{article}{
	title={Sobolev-Poincar\'e implies John},
	author={Buckley, S.},
	author={Koskela, P.},
	journal={Math. Res. Lett.},
	volume={2},
	date={1995},
	number={5},
	pages={577--593}
}

\bib{Bur}{book}{
	title={A Course in Metric Geometry},
	author={Burago, D.},
	author={Burago, Y.},
	author={Ivanov, S.},
	 SERIES = {Graduate Studies in Mathematics},
    VOLUME = {33},
 PUBLISHER = {American Mathematical Society, Providence, RI},
      YEAR = {2001}
}	

\bib{CYZ}{article}{,
    AUTHOR = {Carleson, L.},
    author={Jones, P. W.},
    author={Yoccoz, J.-C.},
     TITLE = {Julia and {J}ohn},
   JOURNAL = {Bol. Soc. Brasil. Mat. (N.S.)},
    VOLUME = {25},
      YEAR = {1994},
    NUMBER = {1},
     PAGES = {1--30}
}

\bib{Gol}{book}{
	title={Geometric Theory of Functions of a Complex Variable},
	author={Goluzin, G. M.},
	 SERIES = {Translations of Mathematical Monographs},
    VOLUME = {26},
 PUBLISHER = {American Mathematical Society, Providence, RI},
      YEAR = {1969}
}

\bib{Hak}{article}{
	title={Quasisymmetric embeddings of slit Sierpi\'{n}ski carpets},
	author={Hakobyan, H.},
	author={Li, W.-B.},
	journal={Trans. Amer. Math. Soc.},
	volume={376},
	date={2023},
	number={12},
	pages={8877--8918}
}	
\bib{Jo}{article}{
	author={John, F.},
	title={Rotation and strain},
	journal={Comm. Pure Appl. Math.},
	volume={14},
	year={1961},
	pages={391--413}
}
\bib{Kin}{article}{
	title={Conformal dimension and boundaries of planar domains},
	author={Kinneberg, K.},
	journal={Trans. Amer. Math. Soc.},
	volume={369},
	date={2017},
	number={9},
	pages={6511–-6536}
}

\bib{Kosk}{article}{
	title={A density problem for Sobolev spaces on Gromov hyperbolic
domains},
	author={Koskela, P.},
	author={Rajala, T.},
	author={Zhang, Y. R.-Y.},
	journal={Nonlinear Anal.},
	volume={154},
	date={2017},
	pages={189–-209}
}

\bib{Li}{article}{
	title={Quasisymmetric embeddability of weak tangents},
	author={Li, W.},
	journal={Ann. Fenn. Math.},
	volume={46},
	date={2021},
	number={2},
	pages={909–-944}
}

\bib{LR}{article}{
	author={Lin, P.},
	author={Rohde, S.},
	title={Conformal welding of dendrites},
	year={2018},
	pages={preprint}
}

\bib{MS}{article}{
	author={Martio, O.},
	author={Sarvas, J.},
	title={Injectivity theorems in plane and space},
	journal={Ann. Acad. Sci. Fenn. Ser. A I Math.},
	VOLUME = {4},
      YEAR = {1979},
    NUMBER = {2},
     PAGES = {383--401}
}

\bib{Nakki}{article}{
	title={John disks},
	author={N\"akki, R.},
	author={V\"ais\"al\"a, J.},
	journal={Exposition. Math.},
	volume={9},
	date={1991},
	number={1},
	pages={3–-43}
}

\bib{Vai}{article}{
	title={Uniform domains},
	author={V\"ais\"al\"a, J.},
	journal={Tohoku Math. J.},
	volume={40},
	date={1988},
	number={1},
	pages={101–-118}
}

\bib{Zor}{book}{
	title={Analytic Topology},
	author={Whyburn, G. T.},
	  VOLUME = {28},
 PUBLISHER = {American Mathematical Society, New York},
      YEAR = {1942}
}

\bib{Wu}{book}{
	 AUTHOR = {Wu, A. A.},
     TITLE = {Visual Spheres of Expanding Thurston Maps: Their
              Weak Tangents and Porous Subsets},
      NOTE = {Thesis (Ph.D.)--University of California, Los Angeles, 132 pp.},
 PUBLISHER = {ProQuest LLC, Ann Arbor, MI},
      YEAR = {2019}
}

\end{biblist}
\end{bibdiv}

\end{document}